\long\def\onlongversion#1{\ifthenelse{\boolean{longversion}}{#1}{\relax}}
\newenvironment{myproof}[1][\proofname]{\proof[#1]}{\endproof}
\newtheorem{theorem}{Theorem}[section]
\newtheorem{lemma}[theorem]{Lemma}
\newtheorem{proposition}[theorem]{Proposition}
\newtheorem{remark}[theorem]{Remark}
\newtheorem{definition}[theorem]{Definition}
\let\tilde@orig\tilde
\let\tilde\widetilde
\def\carac#1,#2{
\left[
\begin{smallmatrix}
#1 \\ #2
\end{smallmatrix}
\right]
}
\def\fonction#1#2#3#4#5{
  \begin{array}{ccccc}
    #1 & : & #2 & \to & #3 \\
       & & #4 & \mapsto & #5 \\
  \end{array}
}
\newcommand{\ideta}{I_{\Alg}}
\newcommand{\la}{\leftarrow}
\newcommand{\tilderho}{\tilde{\rho}}
\newcommand{\tildeK}{\tilde{K}}
\newcommand{\tildeA}{\tilde{A}}
\newcommand{\teta}{\tilde{\eta}}
\newcommand{\Gr}{\mathrm{Gr}}
\newcommand{\Alg}{\mathscr{K}}
\newcommand{\oGm}{\mathbb{G}_m}
\newcommand{\Sie}{\mathbb H} 
\newcommand{\Aff}{\mathbb A} 
\newcommand{\C}{\mathbb C} 
\newcommand{\N}{\mathbb N}
\newcommand{\Z}{\mathbb{Z}}
\newcommand{\R}{\mathbb R}
\newcommand{\Q}{\mathbb Q}
\newcommand{\proj}{\mathbb P}
\newcommand{\F}{\mathbb F}
\newcommand{\iso}{\stackrel{\sim}{\rightarrow}}
\newcommand{\xpol}{\mathscr{X}}
\newcommand{\pol}{\mathscr{L}}
\newcommand{\ppol}{\mathscr{L}_0}
\newcommand{\mpol}{\mathscr{M}}
\newcommand{\Sp}{\mathrm{Sp}}
\DeclareMathOperator{\Gal}{Gal}
\renewcommand*{\emptyset}{\varnothing}
\renewcommand*{\phi}{\varphi}
\renewcommand*{\epsilon}{\varepsilon}
\newcommand{\scalarmult}{\ensuremath{\mathrm{ScalarMult}}}
\newcommand{\normalize}{\ensuremath{\mathrm{Normalize}}}
\newcommand{\normaladd}{\ensuremath{\mathrm{NormalAdd}}}
\newcommand{\genericisogeny}{\ensuremath{\mathrm{GenericIsogeny}}}
\newcommand{\evaluate}{\ensuremath{\mathrm{Evaluate}}}
\DeclareMathOperator{\Id}{Id}
\DeclareMathOperator{\Spec}{Spec}
\DeclareMathOperator{\Var}{V}
\newcommand{\sO}{\tilde{O}}
\newcommand{\overk}{\ensuremath{\overline{k}}}
\newcommand{\tildeO}{\ensuremath{\tilde{0}_A}}
\newcommand{\tildeP}{\ensuremath{\tilde{P}}}
\newcommand{\tildeQ}{\ensuremath{\tilde{Q}}}
\newcommand{\tildePQ}{\ensuremath{\tilde{P+Q}}}
\newcommand{\overn}{\ensuremath{{n}}}
\newcommand{\overtwo}{\ensuremath{{2}}}
\newcommand{\Zstruct}[1]{\ensuremath{Z({#1})}}
\newcommand{\dZstruct}[1]{\hat{Z}(#1)}
\newcommand{\Zn}{\Zstruct{\overn}}
\newcommand{\dA}{\hat{A}}
\newcommand{\Ztwo}{\Zstruct{\overtwo}}
\newcommand{\dZtwo}{\dZstruct{\overtwo}}
\newcommand{\Ztwon}{\Zstruct{{2n}}}
\newcommand{\Kc}{\mathscr{B}}
\newcommand{\Rc}{\mathscr{R}}
\title[Computing separable isognies in quasi-optimal time]{Computing
  separable isogenies in quasi-optimal time}
\author{David Lubicz, Damien Robert}
\begin{document}

\maketitle
\begin{abstract} 
  Let $A$ be an abelian variety of dimension $g$ together with a principal
  polarization $\phi: A \rightarrow \dA$ defined over a field $k$.
  Let $\ell$ be an odd integer prime to the characteristic of $k$
  and let $K$ be a subgroup of $A[\ell]$ which is maximal
  isotropic for the Riemann form associated to $\phi$. We suppose that
  $K$ is defined over $k$ and let $B=A/K$ be the quotient abelian
  variety together with a polarization compatible with $\phi$. Then $B$, as
  a polarized abelian variety, and the isogeny $f:A\rightarrow B$ are
  also defined over $k$. In this paper, we describe an algorithm that
  takes as input a theta null point of $A$ and a polynomial system
  defining $K$ and outputs a theta null point of $B$ as well as
  formulas for the isogeny $f$.
  We obtain a complexity of
  $\sO(\ell^{\frac{rg}{2}})$ operations in $k$ where $r=2$ (resp.
  $r=4$) if $\ell$ is a sum of two squares (resp. if $\ell$ is a sum of
  four squares)
  which constitutes an improvement over the algorithm described in
  \cite{robertcosset}. We note that the algorithm is quasi-optimal if
  $\ell$ is a sum of two squares
  since its complexity is quasi-linear in the degree of $f$.
\end{abstract}

  \section{Introduction}\label{sec:intro}

Let $k$ be a  field and let $A$ be a principally polarized
abelian variety of dimension $g$ defined over $k$.  Let $\ell$ be an
odd integer prime to the characteristic of $k$ and let $K$ be a
subgroup of exponent $\ell$ of $A$. Let $B=A/K$ be the quotient
abelian variety.  In this paper, we are interested in computing the
isogeny $f: A \rightarrow B=A/K$. Being able to compute isogenies
between abelian varieties has many applications in algebraic number
theory \cite{sutherland2009hilbert,kohel1996endomorphism,MorainIsogenies,bisson2009endomorphism,broker2009modularIsogenies,schoof1985elliptic,schoof1995counting,atkin1988number,elkies1992explicit}.

In order to have a concrete description of $A$, we consider a projective
embedding of $A$ provided by the global sections $H^0(A,\pol)$ of a
symmetric ample line bundle $\pol$. In the following, if $\xpol$ is an
ample line bundle on $A$, we denote by
$\phi_{\xpol}: A \rightarrow \hat{A}$ the polarization corresponding to
$\xpol$ and by $e_\xpol: A \times A \rightarrow \oGm$, the associated
Riemann form. We suppose that $\pol=\ppol^n$ with $\ppol$ a principal line
bundle of $A$ and $n \in \N$ that we call the level of $\pol$.  If $4 \mid
n$, we have a very convenient description of $A$ as the intersection of a
set of quadrics, given by the Riemann equations, in $\proj^{H^0(A,\pol)}$
the projective space over the $k$-vector space $H^0(A,\pol)$ of dimension
$n^g$. A choice of a basis of $H^0(A,\pol)$ and as a consequence of a choice
of an embedding of $A$ into the projective space $\proj^{n^g-1}$ is fixed
by a choice of a \emph{theta structure} for $(A,\pol)$ (see \cite[Def.
p. 297]{MR34:4269}).

We suppose that this embedding is defined over $k$ which implies that
$k$ contains the field of definition of $\phi_\pol$.  In order to
avoid field extensions and have a more compact representation of $A$,
we want to take $n$ as small as possible.  Most of the time, in
applications, $n=2$ or $n=4$. In the following, we assume that $2|n$
and that $\ell$ is prime to $n$.  

Now let $K$ be a subgroup of $A[\ell]$ maximal isotropic for the Riemann
form $e_{\pol^\ell}$, that is given as input by a set of homogeneous
equations in $\proj^{H^0(A,\pol)}$, and let $f: A \to B=A/K$ be the
corresponding isogeny. Then $\pol$ descends by $f$ to a line bundle $\mpol$ on
$B$ which is a $n$-power of a principal polarization $\mpol_0$. If $K$ is
$k$-rational, then both $B$ and the polarization $\mpol$ are also rational.
Our goal is to compute an embedding of $B$ in $\proj^{H^0(B,\mpol)}$ as
well as formulas for the isogeny $f:A\rightarrow B$.
We can prove
\begin{theorem}\label{main} Let $(A,\pol, \Theta_n)$ be a polarized
  abelian variety of dimension $g$ with a symmetric theta structure
  of level $n$ even. Let $\ell$ be an integer prime to $n$
  and assume that $\ell n$ is prime to the characteristic of $k$.
  Let $K$ be a subgroup of $A[\ell]$ maximal isotropic for
  $e_{\pol^\ell}$. Then one can compute the isogeny $A\rightarrow
  A/K$ in theta coordinates of level $n$ by using
  $\sO(\ell^{\frac{rg}{2}})$ operations in $k$ where $r=2$ (resp.
  $r=4$) if $\ell$ is a sum of two squares (resp. if $\ell$ is a sum of
  four squares).
\end{theorem}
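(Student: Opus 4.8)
The plan is to reduce the whole problem to one classical tool --- the isogeny formulas of Mumford and Koizumi for theta functions --- applied through an auxiliary integer matrix. First I would write $\ell = a_1^2 + \dots + a_r^2$ with $r = 2$ when $\ell$ is a sum of two squares and $r = 4$ in general (Lagrange), and pick $M \in \M_r(\Z)$ with $M\,{}^t\!M = \ell\,\Id_r$ built from the $a_i$ (the rotation matrix $\left(\begin{smallmatrix} a_1 & -a_2 \\ a_2 & a_1 \end{smallmatrix}\right)$ for $r = 2$, a quaternion matrix for $r = 4$). This $M$ induces an isogeny $\phi_M\colon A^r \to A^r$ with $\phi_M\circ\phi_{{}^t\!M} = [\ell]_{A^r}$, and the Koizumi formula applied to $\phi_M$ and to the copies of $K$ sitting inside $A^r$ yields an identity expressing the level-$n$ theta coordinates of $f(x)\in B$ --- for the descended polarisation $\mpol = \mpol_0^n$, whose existence and $k$-rationality, like that of $B$ and of $f$, follow from the hypotheses ($n$ even, symmetric theta structure, $\ell$ prime to $n$, $\ell n$ prime to $\mathrm{char}\,k$) by standard descent --- as a finite sum of $\ell^{rg/2}$ terms, indexed by a group $G$ attached to $K$ and $M$ (essentially $K$ itself when $r = 2$, and a group of the larger order $\ell^{2g}$ living over $A^2$ when $r = 4$), each term a product of a bounded number of level-$n$ theta coordinates of $A$ evaluated at the $\Z$-linear combinations of $x$ and of points of $K$ prescribed by $M$. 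Specialising at $x = 0_A$ would give the theta null point of $B$; leaving $x$ generic gives evaluation formulas for $f$. A \emph{change of level} step, needed anyway to keep the output in level $n$ rather than $n\ell$, lets the whole computation be carried out with level-$n$ data only.

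I would then turn this into an algorithm in three stages. (i) From the input polynomial system cutting out $K$ in $\proj^{H^0(A,\pol)}$, extract generators $P_1,\dots,P_g$ of $K\cong(\Z/\ell\Z)^g$ together with their level-$n$ theta coordinates; this is a bounded Gröbner-basis/resultant computation, dominated in cost by what follows, and $k$-rationality of $K$ keeps it over $k$. (ii) Fix \emph{compatible affine lifts} of all the points $x + \sum m_i P_i$ (and their $M$-twisted analogues in $A^r$) appearing in the formula: the right-hand side is a sum of vectors in $k^{n^g}$, each individually defined only up to a scalar, so one must propagate a single coherent normalisation along the indexing group $G$, using the group law on $K$ and the Riemann/differential-addition relations on the Kummer variety --- this is exactly the \coherentlift{}/\normaladd{} machinery, organised so that the $\ell^{rg/2}$ required lifts each cost only $\sO(1)$ amortised. (iii) Accumulate the sum.

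For the complexity count: $|G| = \ell^{rg/2}$; each summand is a product of a bounded number of theta coordinates among the $n^g$ of them ($n$ being fixed); and both the walk that produces the compatible lifts and the accumulation of the sum touch each of the $\ell^{rg/2}$ points a constant number of times with $\sO(1)$ field operations per touch. Folding in the preprocessing for $K$, the theta null point of $B$ together with an evaluation routine for $f$ come out in $\sO(\ell^{rg/2})$ operations in $k$, with $r = 2$ when $\ell = a_1^2+a_2^2$ and $r = 4$ otherwise. Since $\DEG f = |K| = \ell^g$, the $r = 2$ case is quasi-linear in $\DEG f$, as claimed.

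The hard part is stage (ii). Koizumi's formula is a \emph{projective} identity, and reading its right-hand side as an honest vector sum forces one to track a globally consistent system of affine lifts over the entire group $G$ --- not a lift of each point in isolation, but lifts that are mutually compatible for the addition formulas used to generate them --- and to organise the walk so that this costs $\sO(1)$ per point rather than recomputing lifts from scratch, which would cost an extra $\ell^{rg/2}$ factor and destroy quasi-optimality; this is also precisely where the shape of $M$ (and hence the distinction $r = 2$ versus $r = 4$) is used. By comparison the remaining ingredients --- existence of $M$, the Koizumi identity itself, the change-of-level formulas, and descent of $\pol$ along $f$ with $k$-rationality of $B$ and $f$ --- are classical.
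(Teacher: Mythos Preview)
There is a genuine gap in stage~(i). You claim that extracting generators $P_1,\dots,P_g$ of $K$ together with their theta coordinates can be done over $k$ because ``$k$-rationality of $K$ keeps it over $k$''. This conflates rationality of $K$ as a subscheme with rationality of its geometric points: the ideal cutting out $K$ has coefficients in $k$, but the individual points of $K(\overk)$ --- and in particular any basis $P_1,\dots,P_g$ --- typically live only over the compositum $k'$ of their fields of definition, whose degree over $k$ can be as large as $\ell^g$ over a finite field and much worse over a number field. Your stages~(ii)--(iii) then run in $k'$, giving $\sO(\ell^{rg/2})$ operations in $k'$, not in $k$. That is exactly the algorithm and the complexity bound of \cite{robertcosset}, which the present theorem is meant to improve upon.

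The paper avoids this by never naming a geometric point of $K$. It works in the \'etale $k$-algebra $\Alg=k[U]/(Q)$ (of $k$-dimension $\ell^g$) attached to $K$ and replaces the whole family of geometric points by a single \emph{formal point} $\ideta\in A(\Alg)$, the tautological one coming from the closed immersion $K\hookrightarrow A$. Differential additions, the \normalize{} step, and the Koizumi product of Proposition~\ref{prop:koigen} are each performed once with coefficients in $\Alg$ (or in $\Alg^{\otimes 2}$ when $r=4$); each such operation costs $\sO(\ell^{rg/2})$ in $k$ and only $O(\log\ell)$ of them are needed. The sum over $K(\overk)$ appearing in \eqref{eq:thetanull1}--\eqref{eq:thetanull2} is then read off from the resulting element of $\Alg^{\otimes r/2}$ by a trace-type identity (Lemma~\ref{lemma1}), still without leaving $k$. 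The compatibility-of-lifts issue you correctly flag as the hard part is real, but it is handled at the level of the single formal point (Propositions~\ref{prop:pseudogoodformal} and~\ref{prop:main1}) rather than by a walk through $\ell^{rg/2}$ geometric points over $k'$.
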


This is the same result as \cite[Th. 1.1]{robertcosset} except that
they take for input the kernel $K$ given by generators of the group
$K(\overk)$, and the resulting theorem they have is that
the isogeny can be computed in $\sO(\ell^{\frac{rg}{2}})$ operations in $k'$
where $k'$ is the compositum of the field of definition of the geometric points of $K$.
When $k$ is a finite field, this yields a complexity
of $\sO(\ell^{\frac{rg^2}{2}})$ operations in $k$ for the algorithm of \cite{robertcosset}, complexity that can be much worse when $k$ is a number field.
In the case $r=2$, we remark that
the complexity of the algorithm presented in this paper is
quasi-linear in the degree of the kernel of the isogeny which is
quasi-optimal.

Our algorithm is very similar to the one of \cite[Th.
1.1]{robertcosset} which is based on one hand on the algorithm
described in \cite{MR2982438} to compute an isogeny $f:A \rightarrow
B$ between $A$ together with a line bundle of level $n$ and $B$ with a
line bundle of level $n \ell$, and on the other hand on the Koizumi
general addition formula \cite{MR0480543} from which it is deduced a
change of level formula (see \cite[Prop. 4.1]{robertcosset}). Our main
improvement consists in working with ``formal points'' rather than with
geometric points of the kernel $K$.
 
\onlongversion{%
\begin{remark}
  Let $\mpol$ be an ample line bundle on $B$ 
defining an embedding of $B$ into $\proj^{H^0(B,\mpol)}$. 
If we want to express the isogeny $f: A \to B$ in term of this embedding, 
it is natural to take $\mpol$ such that
$f^*(\mpol)$ is a power $\pol^m$ of $\pol$.
Indeed in that case $f$ comes
from a morphism of the projective spaces $\proj^{H^0(A,\pol^m)}
\rightarrow \proj^{H^0(B,\mpol)}$ which can be computed  without using
the equations defining the embedding of $A$ inside
$\proj^{H^0(A,\pol)}$. Descent theory tells us (see \cite[Prop.
2]{MR34:4269}) that, for $m \in \N^*$, there exists such a line bundle
$\mpol$ on $B$ if and only if $K$ is a
subgroup of $\ker \phi_{\pol^m}$ and is isotropic for
$e_{\pol^m}$.
As by \cite[Prop.  4]{MR34:4269}, $\ker \phi_{\pol^m} = \{ x \in
A(\overk) | m.x \in \ker \phi_\pol \}$, we have that $K$ is a
subgroup of $\ker \phi_{\pol^m}$ if and only if $\ell|m$. For
efficiency reasons, it is better to take $m=\ell$.
It is also more convenient to work with power of principal polarisations.
When $\ell$ is prime to $n$, by \cite[Prop. 2]{MR34:4269}, this is the case for $\mpol$ if and only if
$K$ is maximal isotropic in the $\ell$-torsion. 
This discussion should motivates the hypothesis made in
Theorem~\ref{main}.
  \label{rem:intro}
\end{remark}}
  
The paper relies heavily on the theory of theta functions which
provides a convenient framework to represent and manipulate global
sections of ample line bundles of abelian varieties.  In order to
avoid technical details, we have chosen to present our results
using the classical theory of theta functions.  For this, we assume
that $k$ is a number field and we suppose given a fixed embedding of
$k$ into $\C$.  Nonetheless, it should be understood that, by using
Mumford's theory of algebraic theta functions, all our algorithms
apply to the case of abelian varieties defined over any  field of
characteristic not equal to $2$. The notations used have been chosen
to make the translation into Mumford's formalism straightforward.

Our paper is organized as follows: in Section \ref{sec:basicfacts}, we
gather some basic definitions about theta functions. In Section
\ref{sec:recall}, we recall the principle of the algorithm of
\cite{robertcosset}. Then, in Section \ref{sec:gener}, we explain how to compute
with formal points in $K$. The Section \ref{sec:improve} is devoted to
the proof of the main results of this paper and in Section
\ref{sec:examples} we give some examples.

\section{Some notations and basic facts}\label{sec:basicfacts}
In this section, in order to
fix the notations, we recall some well known facts on analytic
theta functions (see
for instance \cite{MR85h:14026,MR2062673}). Let $\Sie_g$ be the $g$ dimensional Siegel upper-half
space which is the set of $g\times g$ symmetric matrices $\Omega$ with
coefficients in $\C$ whose
imaginary part is positive definite. For $\Omega \in \Sie_g$, we denote
by $\Lambda_\Omega= \Z^g+\Omega \Z^g$ the lattice of $\C^g$. If $A$
is a complex abelian variety of dimension $g$ with a
principal polarization then $A$ is analytically isomorphic to $\C^g /
\Lambda_\Omega$ for a certain $\Omega \in \Sie_g$.
For $a,b \in
\Q^g$, the theta function with rational characteristics $(a,b)$ is
the
analytic function on $\C^g \times \Sie_g$: \begin{equation}
  \theta \carac a,b (z,\Omega)= \sum_{\nu \in \Z^g} \exp\big[\pi i
  ^t(\nu+a)\Omega(\nu+a)+ 2\pi i ^t(\nu+a)(z+b)\big]. \end{equation} We say that a function $f$ on $\C^g$ is
$\Lambda_{\Omega}$-quasi-periodic of level $n \in \N$
if for all $z \in \C^g$ and $m \in \Z^g$, we have:$f(z+m)=f(z)$,
$f(z+\Omega m)=\exp(-\pi i n ^t{m}\Omega m - 2\pi i n {^t}{z} m) f(z)$
(where $^t v$ is the transpose of the vector $v$).
For any $n \in \N^*$, the set $H_{\Omega,n}$ of $\Lambda_\Omega$-quasi-periodic
functions of level $n$ is a $\C$-vector space of dimension $n^g$ a
basis of which can be given by the theta functions with
characteristics:
$(\theta \carac 0,{b/n} (z, n^{-1} \Omega))_{b\in [0, \ldots ,
n-1]^g}$. There is a well known correspondence (see \cite[Appendix
B]{MR2062673}) between
the vector space $H_{\Omega,n}$ and $H^0(A,\ppol^n)$ where $\ppol$
is the principal line bundle on $A$ canonically defined by a choice of
$\Omega$.

Once we have chosen a level $n \in \N^*$ and $\Omega \in \Sie_g$ such
that the abelian variety $A$ is analytically isomorphic to
$\C^g/\Lambda_\Omega$, for the rest of this paper, we adopt the
following conventions: we let $\Zn = (\Z / n \Z)^g$ and for a point
$z \in \C^g$ and $\nu \in \Zn$, we put $\theta^A_\nu(z) = \theta \carac
0,{\nu/n}(z,\Omega/n)$. If no confusion is possible, we will
write $\theta_\nu(z)$ in place of $\theta_\nu^A(z)$.

A theorem of Lefschetz tells that if $n \ge 3$, the functions in
$H_{\Omega,n}$ give a projective embedding of $A$:
\begin{equation}\label{eq:rhon}
\fonction{\rho_{n}}{\C^g/(\Z^g+\Omega\Z^g)}{\proj_\C^{\Zn}}%
{z}{\left(\theta_\nu(z)\right)_{\nu \in \Zn}}.
\end{equation}
For $\ell=2$, the functions in $H_{\Omega,2}$ do not give a projective
embedding of $A$. Actually, it is easy to check that for all $f \in
H_{\Omega,2}$, we have $f(-z)=f(z)$. 
Under some well known general conditions \cite[Cor. 4.5.2]{MR0480543}, the image of the embedding defined by $H_{\Omega,2}$ in
$\proj^{\Ztwo}$ is the Kummer variety associated to $A$, which is 
the quotient of $A$ by the automorphism $-1$.

It is natural to look for algebraic relations between theta
functions to obtain a description of the abelian variety as a closed subvariety
of a projective space. A lot of them are given by a result of Riemann (see
\cite[Th. 1]{RobLub}):
\begin{theorem}\label{th:thetaadd}
Let $i,j,k,l \in \Ztwon$. We suppose that $i+j$, $i+k$ and $i+l \in
\Zn$. Let $\dZtwo$ be the dual group of $\Ztwo$. For all $\chi \in
\dZtwo$ and $z_1, z_2 \in \C^g$ we have
\begin{equation*}\label{eq:thetaadd}
  \begin{split}
  \left(\sum_{\eta \in \Ztwo} \chi(\eta) \theta_{i+j+\eta}(z_1+z_2)
  \theta_{i-j+\eta}(z_1-z_2)\right) \left( \sum_{\eta \in \Ztwo}
  \chi(\eta)
  \theta_{k+l+\eta}(0) \theta_{k-l+\eta}(0) \right) \\ = \left( \sum_{\eta \in
  \Ztwo} \chi(\eta) \theta_{i+k+\eta}(z_1) \theta_{i-k+\eta}(z_1) \right) \left(
  \sum_{\eta \in \Ztwo} \chi(\eta) \theta_{j+l+\eta}(z_2) \theta_{j-l+\eta}(z_2)
  \right).
\end{split}
\end{equation*}
\end{theorem}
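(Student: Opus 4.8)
The identity is a form of Riemann's bilinear theta relations; the plan is to prove it by direct manipulation of the Gaussian series defining the $\theta_\nu$, using only orthogonality of the characters of $\Ztwo$ and a single linear change of summation variables. I would first write every $\theta_\nu$ as its defining series over $\Z^g$. Adding to a characteristic the contribution of an $\eta\in\Ztwo$ multiplies the $m$-th term of such a series by a sign $(-1)^{{}^tm\eta}$, so that each of the four bracketed factors of the statement — after expanding the product of two series as a double sum over $(\Z^g)^2$ and summing over $\eta$ against $\chi$ — becomes $2^g$ times the same double sum restricted to the sublattice cut out by $m+m'\equiv\hat{\chi}\pmod 2$, where $\hat{\chi}\in(\Z/2\Z)^g$ represents $\chi$. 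Thus both sides of the claimed identity turn into $2^{2g}$ times a sum over quadruples $(v_1,v_2,v_3,v_4)\in(\Z^g)^4$ with $v_1+v_2$ and $v_3+v_4$ both $\equiv\hat{\chi}\pmod 2$, each summand weighted by $\exp\!\bigl(\pi i\sum_s{}^tv_s\tau v_s+2\pi i\,L\bigr)$, where $\tau$ is the relevant period matrix and $L$ is linear in $z_1,z_2$ and in the characteristics $i,j,k,l$.

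The key step is then to apply, to the quadruple of summation vectors produced by the left-hand side, the matrix $T=\tfrac12\left(\begin{smallmatrix}1&1&1&1\\1&1&-1&-1\\1&-1&1&-1\\1&-1&-1&1\end{smallmatrix}\right)$: it is the unique matrix with $2T$ integral and $T^2=\mathrm{Id}$ that carries the quadruple of arguments $(z_1+z_2,\,z_1-z_2,\,0,\,0)$ to $(z_1,z_1,z_2,z_2)$ and the quadruple of characteristics $(i+j,\,i-j,\,k+l,\,k-l)$ to $(i+k,\,i-k,\,j+l,\,j-l)$, i.e.\ precisely to the data of the right-hand side. Since the rows of $T$ are orthonormal, $\sum_s{}^t(Tv)_s\,\tau\,(Tv)_s=\sum_s{}^tv_s\,\tau\,v_s$, so the Gaussian exponent is preserved; the action of $T$ on the arguments and the characteristics just recorded shows that $L$ is preserved as well; and the two parity conditions coming from the two $\eta$-summations are exactly what makes $T$ a bijection of the left-hand index set onto the right-hand one. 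The two sums then agree summand by summand, which is the claim. The hypotheses $i+j,i+k,i+l\in\Zn$ enter precisely here: they guarantee that the characteristics on the two sides, related by the non-integral matrix $T$, are consistently defined at the intended level and that $T$ preserves integrality of the shifts.

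The step I expect to cost the most effort is not the substitution but the bookkeeping around it: keeping straight the half-integral characteristic shifts contributed by $\Ztwo$, the factor-of-two rescalings relating the lattices at levels $n$ and $2n$ (and the level-$2$ group), and the exact parity sublattices, so as to be sure $T$ really is a bijection of the index sets and that no level condition is broken. A more structural alternative would be to regard both sides as functions of $(z_1,z_2)$, check with the conventions of Section~\ref{sec:basicfacts} that they are $\Lambda_\Omega\times\Lambda_\Omega$-quasi-periodic with the same automorphy factor and transform the same way under the finite theta group, conclude they are proportional, and pin down the scalar by comparing one Fourier coefficient; this needs the same care with the theta-group conventions and is not shorter, so I would carry out the explicit computation above.
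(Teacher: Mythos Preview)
Your approach is correct and is the classical proof of Riemann's quartic relations: expand the four theta series, collapse each $\eta$-sum by character orthogonality into a parity restriction $m+m'\equiv\hat\chi\pmod 2$, and then apply the orthogonal involution $T=\tfrac12\left(\begin{smallmatrix}1&1&1&1\\1&1&-1&-1\\1&-1&1&-1\\1&-1&-1&1\end{smallmatrix}\right)$ to identify the two fourfold sums termwise. Your identification of the role of the hypotheses $i+j,i+k,i+l\in\Zn$ is also right: together with $i-j=2i-(i+j)$ etc.\ they force all eight characteristics to lie in $\Zn$, and the parity constraints make $T$ send the left index set bijectively onto the right one.

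There is nothing to compare with in the paper itself: Theorem~\ref{th:thetaadd} is stated without proof and attributed to Riemann via the reference \cite[Th.~1]{RobLub}. The argument you outline is exactly the one found there (and in standard sources such as Mumford's \emph{Tata Lectures} or Igusa), so your proposal is not merely correct but is the expected proof. The only caution is the bookkeeping you already flagged: with the paper's convention $\theta_\nu(z)=\theta\carac 0,{\nu/n}(z,\Omega/n)$ the embedding $\Ztwo\hookrightarrow\Zn$ is via $\eta\mapsto (n/2)\eta$, so the sign picked up in the $m$-th term is $\exp(2\pi i\,{}^t m\,\eta/2)=(-1)^{{}^tm\eta}$ as you wrote; just be sure to carry the factor $\Omega/n$ consistently through the quadratic form when checking that $T$ preserves it.
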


For $n \in \N^*$, we can associated to $\Omega \in \Sie_g$ its level
$n$ theta null point $(\theta_\nu(0))_{\nu \in \Zn}$.  By taking
$z_2=0$ in Theorem \ref{th:thetaadd}, we obtain a set of quadratic
equations which are parametrized by the theta null points of level
$n$.  A result of Mumford \cite[Th.2]{MR34:4269} tells us that if $4|n$ this system of
equations is complete in the sense that it gives the embedding of $A$
into $\proj^{\Zn}$ defined by $n$ and $\Omega$ following
\eqref{eq:rhon}.

In the context of Mumford's theory of algebraic theta function, the
data of $n$ and $\Omega$ which determines the theta null point is
replaced by a level $n$ \emph{theta structure} $\Theta_n$ \cite[Def.
p. 297]{MR34:4269}.  

Let $\kappa: \Aff^{\Zn} - \{ 0 \} \rightarrow \proj^{\Zn}$ be the
canonical projection.  We denote by $\tildeA$ the affine cone of $A$
that is the closed subvariety defined as the Zariski closure of
$\kappa^{-1}(A)$ in $\Aff^{\Zn}$.  We adopt the following convention:
if $P \in \proj^{\Zn}(\C)$, we will denote by $\tildeP$ an
\emph{affine lift} of $P$ that is an element of $\Aff^{\Zn}(\C)-\{0 \}$
such that $\kappa(\tildeP)=P$.  We denote by $\tildeP_\nu \in \C$
for $\nu \in \Zn$ the $\nu^{th}$ coordinate of the point $\tildeP$ and for
$\lambda \in \C$, we let $\lambda\star \tildeP \in \Aff^{n^g}(\C)$
be the point such that $(\lambda\star \tildeP)_\nu=\lambda \tildeP_\nu$.
In the same way, if $P \in A(\C)$, we denote by $z_P \in \C^g$ a point
such that $\rho_n(z_P)=P$. We remark that $z_P$ actually defines the
affine lift $(\theta_\nu(z_P))_{\nu \in \Zn} \in \tildeA(\C)$ of $P$
that we call a \emph{good lift} (note that such a good lift is not
unique since $z_P$ is defined by $P$ up to an element of
$\Lambda_\Omega$). We denote by $\tilderho_n : \C^g \rightarrow
\tildeA$, the map given by $z \mapsto 
(\theta_\nu(z))_{\nu \in \Zn}$.
In the following, we choose $\tildeO \in
\tildeA(k)$ an affine lift of $\kappa((\theta_\nu(0))_{\nu \in \Zn})$. 
  
As the canonical line bundle defined by $\Omega$ is not
  defined over $k$ (not even over an algebraic extension of $k$), we
  have, in general, $\tildeO= \lambda (\theta_\nu(0))_{\nu \in \Zn}$ for $\lambda
  \in \C$ not in $\overk$.  This subtlety does not change the
  projective embedding given by Riemann's equations, nor the
  computations presented in this paper for homogeneity reasons (see
  Proposition \ref{prop:hom} and \cite[Rem.  3]{RobLub}) so that we
  can safely suppose in the following that $\lambda =1$.  

Using Riemann equations, from the data of $\tildeO$,
$(\theta_\nu(z_1))_{\nu \in \Zn}$, $(\theta_\nu(z_2))_{\nu \in \Zn}$,
$(\theta_\nu(z_1-z_2))_{\nu \in \Zn} \in \tildeA(\C)$, one can recover
$(\theta_\nu(z_1+z_2))_{\nu \in \Zn}\in \tildeA(\C)$ provided that for sufficiently many
$\chi \in \dZtwo$, $k,l \in \Zn$, we have $\sum_{\eta \in \Ztwo}
\chi(\eta) \theta_{k+l+\eta}(0)\allowbreak \theta_{k-l+\eta}(0)\neq 0$. This is
always the case if the level $n$ is divisible by $4$, and if $n=2$, it is
true at the condition that the projective embedding given by level $2$
theta functions is projectively normal (see \cite[\S 4]{RobLub}). We will
always suppose that these conditions are fulfilled in the following.
The operation on affine points that we obtain is called a \emph{differential
addition} (see \cite{RobLub} for more details).
Chaining differential additions
in a classical Montgomery ladder \cite[Algorithm 9.5 p. 148]{MR2162716}
yields an algorithm that takes as inputs $\tildeQ=(\tildeQ_\nu)_{\nu \in
\Zn}$, $\widetilde{P+Q}=( (\widetilde{P+Q})_\nu)_{\nu \in \Zn}$,
$\tildeP=(\tildeP_\nu)_{\nu \in \Zn}$, $\tildeO=(\tilde{0}_\nu)_{\nu \in
\Zn}$ and an integer $\ell$ and outputs $\widetilde{Q+\ell P}$.
We write $\widetilde{Q+\ell P}=\scalarmult(\ell, \widetilde{P+Q}, \tildeP,
\tildeQ, \tildeO)$. 

If $4|n$, we can compute the "normal" addition law on the abelian
variety: actually, by computing sums of the form $\sum_{\eta \in
\Ztwo} \chi(\eta) \theta_i(z_1+z_2) \theta_{j_0}(z_1+z_2)$ with
Riemann relations for a fixed $j_0 \in \Zn$, from the knowledge of
$\tildeO$ and  the
projective points $(\theta_\nu(z_j))_{\nu\in \Zn}$ for $j=1,2$, we can
recover the projective point $(\theta_\nu(z_1+z_2))_{\nu \in \Zn}$.
We call $\normaladd$ this operation.

\section{Koizumi formula and isogeny computation}\label{sec:recall}

In this section, we briefly recall the principle of the isogeny
computation algorithm presented in \cite{robertcosset}. Let $(A,
\ppol)$ be a principally polarized abelian variety given by $\Omega
\in \Sie_g$ such that $A$ is analytically isomorphic to $\C^g /
\Lambda_\Omega$. Let $\ell$ be an odd integer number prime to $n$ and
let $K$ be a subgroup of $A[\ell]$ maximal isotropic for the Riemann
form $e_{\ppol}$. As $K$ is isotropic for $e_{\ppol}$, up to an action
of an element of the symplectic group $\Sp_{2g}(\Z)$ on $\Lambda_\Omega$, we
can always suppose that $K=\frac{1}{\ell} \Z^g/\Lambda_\Omega$ so that
our problem comes down to the computation of the isogeny:
$$\fonction{f}{A\simeq \C^g/\Lambda_\Omega}{B\simeq \C^g/\Lambda_{\ell
\Omega}}{z}{\ell z}$$

An important ingredient of the isogeny computation algorithm is the
following formula derived from the general Koizumi formula (see
\cite[Prop. 4.1]{robertcosset}):
\begin{proposition}\label{prop:koigen}
  Let $M$ be a matrix of rank $r$ with coefficients in $\Z$ such that $^t M M = \ell Id$. Let
  $X \in (\C^g)^r$ and $X =  Y M^{-1}\in (\C^g)^r$. Let $i \in
  (\Zn)^r$
  and $j = i M^{-1}$. Then we have
  \begin{equation}\label{eq:koizumi}
    \theta_{i_1}^B(Y_1) \ldots \theta_{i_r}^B(Y_r) = \sum_{\substack{t_1,
    \ldots, t_r \in \frac{1}{\ell} \Z^g/\Z^g \\ (t_1, \ldots, t_r)M =
(0, \ldots, 0)}} \theta_{j_1}^A(X_1 + t_1)
    \ldots \theta_{j_r}^A(X_r + t_r).
  \end{equation}

In particular the projective coordinates of the theta null point of $B$ are given by the equations
\begin{equation}\label{eq:thetanull1}
\theta_k^B(0)\ldots \theta_0^B(0)=\sum_{\substack{t_1,
    \ldots, t_r \in \frac{1}{\ell} \Z^g/\Z^g \\ (t_1, \ldots, t_r)M =
(0, \ldots, 0)}} \theta_{j_1}^A(t_1)
    \ldots \theta_{j_r}^A(t_r),
  \end{equation}
  where $j=(k,\ldots, 0)M^{-1} \in \Zn^r$.

  Likewise, if $P \in A(k)$ we can recover the projective coordinates of
  $f(P)$ via the equations
\begin{equation}\label{eq:thetanull2}
\theta_k^B(\ell z_P)\ldots \theta_0^B(0)=\sum_{\substack{t_1,
    \ldots, t_r \in \frac{1}{\ell} \Z^g/\Z^g \\ (t_1, \ldots, t_r)M =
(0, \ldots, 0)}} \theta_{j_1}^A(X_1+t_1)
    \ldots \theta_{j_r}^A(X_r +t_r).
  \end{equation}
where $z_P \in \C^g$ is such that $\rho_n(z_P)=P$,
$X=Y M^{-1}$ with $Y=(\ell z_P, 0,0,0)$ and $j=(k,0,\ldots, 0)M^{-1}$
for $k \in \Zn$. 
We remark that the $X_i$ are integral multiples $\alpha_i z_P$ of $z_P$,
where $(\alpha_1,\ldots,\alpha_r)=(1,\ldots,0) ^t M$ and $\alpha_i \in
\{1,\ldots, \ell-1\}$.
\end{proposition}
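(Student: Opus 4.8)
The plan is to prove the core identity \eqref{eq:koizumi} by a direct computation with the defining Fourier series of the theta functions, and to obtain \eqref{eq:thetanull1}, \eqref{eq:thetanull2} and the closing remark as specializations. First I would set up the elementary algebra of $M$: from ${}^{t}MM=\ell\,\Id$ one gets $(\det M)^2=\ell^{r}$, so $M$ is invertible with $M^{-1}=\ell^{-1}\,{}^{t}M$ and also $M\,{}^{t}M=\ell\,\Id$, and the sublattice $(\Z^g)^r\,{}^{t}M\subseteq(\Z^g)^r$ has index $\ell^{rg/2}$. A short computation shows that the finite group
\[ T=\{(t_1,\ldots,t_r)\in(\tfrac1\ell\Z^g/\Z^g)^r \mid (t_1,\ldots,t_r)M=(0,\ldots,0)\} \]
equals $\{\tfrac1\ell\,u\,{}^{t}M \bmod (\Z^g)^r : u\in(\Z^g)^r\}$, so that with respect to the pairing $\langle N,t\rangle=\sum_k{}^{t}\nu_k t_k$ it is exactly the group of characters of $(\tfrac1\ell\Z^g/\Z^g)^r$ that kill $(\Z^g)^r\,{}^{t}M$; in particular $\#T=\ell^{rg/2}$.

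Then I would expand the right-hand side of \eqref{eq:koizumi}. Writing out the defining series of each $\theta_{j_k}^{A}(X_k+t_k)$ and multiplying, the sum over $t\in T$ becomes $\sum_{N\in(\Z^g)^r}\bigl(\sum_{t\in T}e^{2\pi i\langle N,t\rangle}\bigr)\,E(N)$, where $E(N)$ is the exponential factor attached to $N=(\nu_1,\ldots,\nu_r)$. The inner character sum equals $\#T$ when $\langle N,t\rangle\in\Z$ for every $t\in T$, i.e. when $N\in(\Z^g)^r\,{}^{t}M$, and $0$ otherwise; so only the indices $N=P\,{}^{t}M$ with $P\in(\Z^g)^r$ contribute, and $P\mapsto P\,{}^{t}M$ is a bijection onto that sublattice. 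Now the hypothesis ${}^{t}MM=\ell\,\Id$ does the essential work: it rescales the quadratic part as $\sum_k{}^{t}\nu_k\,\Omega\,\nu_k=\ell\sum_l{}^{t}P_l\,\Omega\,P_l$, which is precisely the passage from period matrix $\Omega$ to $\ell\Omega$ that describes $B\simeq\C^g/\Lambda_{\ell\Omega}$, while the compatibilities $XM=Y$ and $jM=i$ turn the linear and characteristic parts into $\sum_l{}^{t}P_l Y_l$ and $\tfrac1n\sum_l{}^{t}P_l\,i_l$. Collecting terms yields $\sum_{t\in T}\prod_k\theta_{j_k}^{A}(X_k+t_k)=\#T\cdot\prod_l\theta_{i_l}^{B}(Y_l)$; the global scalar $\#T=\ell^{rg/2}$ is harmless since only the associated point of $\proj^{\Zn}$ is used, so \eqref{eq:koizumi} holds projectively (equivalently, literally after dividing its right-hand side by $\#T$, i.e. after replacing the sum by its average).

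The remaining assertions are specializations. Equation \eqref{eq:thetanull1} is the case $Y=(0,\ldots,0)$, hence $X=0$, with $i=(k,0,\ldots,0)$; equation \eqref{eq:thetanull2} is the case $Y=(\ell z_P,0,\ldots,0)$, where one uses that $f$ is analytically the map $z\mapsto\ell z$, so the left-hand side equals $\theta_k^{B}(f(z_P))\,\theta_0^{B}(0)^{r-1}$ and returns the $k$-th projective coordinate of $f(P)$. For that $Y$ one computes $X=YM^{-1}=z_P\,(1,0,\ldots,0)\,{}^{t}M$, so $X_l=\alpha_l z_P$ with $(\alpha_1,\ldots,\alpha_r)=(1,0,\ldots,0)\,{}^{t}M$ the first column of $M$, and $\sum_l\alpha_l^{2}=({}^{t}MM)_{11}=\ell$ gives $|\alpha_l|\le\sqrt\ell$, which is the closing remark (after negating rows of $M$ one may moreover take the $\alpha_l$ nonnegative).

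I expect the only genuinely delicate point to be the bookkeeping: one must fix a row-versus-column convention for the action of $M$ on index tuples and, consistently with it, check that the annihilator of the summation index $N$ against $T$ is precisely $(\Z^g)^r\,{}^{t}M$, so that $N=P\,{}^{t}M$ is a true change of variables and not a many-to-one collapse; once that is established the whole identity is forced by ${}^{t}MM=\ell\,\Id$ together with $XM=Y$ and $jM=i$. Alternatively, \eqref{eq:koizumi} can be deduced from Koizumi's general transformation formula \cite{MR0480543}, applied to the block-diagonal period matrix $\mathrm{diag}(\Omega,\ldots,\Omega)$ ($r$ blocks) and the endomorphism $M\otimes\Id_g$; the computation above is essentially that argument made explicit.
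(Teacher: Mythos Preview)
Your argument is correct. The paper does not actually prove Proposition~\ref{prop:koigen}; it simply quotes it from \cite[Prop.~4.1]{robertcosset} and attributes the underlying identity to Koizumi~\cite{MR0480543}. What you give is a self-contained derivation: expand the Fourier series, perform the character sum over $T$ to cut down the lattice sum to $N\in(\Z^g)^r\,{}^tM$, and then change variables $N=P\,{}^tM$ so that ${}^tMM=\ell\Id$ turns the quadratic form for $\Omega$ into the one for $\ell\Omega$. This is precisely the classical proof of Koizumi's transformation formula specialized to the block-diagonal period matrix, so your direct route and the paper's citation amount to the same mathematics, with yours being more explicit. Your remark that the two sides of \eqref{eq:koizumi} differ by the harmless projective scalar $\#T=\ell^{rg/2}$ is accurate; the paper is silently working projectively at that point. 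The specializations to \eqref{eq:thetanull1}, \eqref{eq:thetanull2} and the identification $\alpha_l=M_{l1}$ with $\sum_l\alpha_l^2=\ell$ are fine (indeed your bound $|\alpha_l|\le\sqrt{\ell}$ is sharper than the paper's $\alpha_i\in\{1,\ldots,\ell-1\}$).
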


The algorithm takes as input the projective theta null
point of $A$, $0_A$ as well as a basis $(e_1, \ldots, e_g)$ of $K(\overk)$. We  can suppose that $(e_1, \ldots, e_g)$ is the
image by $\rho_n$ of the canonical basis of $\frac{1}{\ell} \Z^g/\Z^g$
if necessary by acting on $\Lambda_\Omega$ by an element of $\Sp_{2g}(\Z)$. 
In order to
compute the isogeny, we need to evaluate expressions of the form
of the right hand of (\ref{eq:koizumi}) which depends on the knowledge
of the good lifts of the form
$(\theta_{\nu}(X_i+t_i))_{\nu \in \Zn}$ where $X_i=\alpha_i z_P$ is an integral multiple of
$z_P$.
Moreover, we note that these good lifts have to be
coherent, meaning that they are all derived from the same $z_P$
such that $\rho_n(z_P)=P$ (we can not change $z_P$ from one term of
the sum to another).
Of course since everything is homogeneous, we only need to work up to a
common projective factor $\lambda$ and take coherent lifts over any affine
lift $\tildeP$ of $P$. This motivates the following definition:

\begin{definition}
  \label{def:compatible}
  We assume that we have fixed once and for all an affine lift
  $\tildeO$ of $0_A$.
  Suppose that we have fixed affine lifts $\tildeQ$ of every
  geometric point $Q \in K(\overk)$
  (we require that the lift of $0_A$ is $\tildeO$). 
  We say that these lifts are \emph{compatible} (with respect to $\tildeO$)
  if they differ from
  the good lifts $(\theta_{\nu}(z_Q))_{\nu \in \Zn}$ for $z_Q \in
  \frac{1}{\ell}\Z^g/\Z^g$ by the same projective factor $\lambda$
  (independently of $Q$).

  Let $P \in A(\overk)$ and 
  fix an affine lift $\tilde{P}$ above it.
  Suppose that together with the compatible lifts $\tilde{Q}$ from above
  we also have chosen affine lifts of every
  geometric point $\alpha P+Q$ where $Q \in K(\overk)$ and
  $\alpha \in \{0,\ldots,\ell-1\}$
  (we require that the lift of $P$ is $\tildeP$). 
  We say that these lifts are \emph{compatible with respect to
  $\tildeO$} if there exists
  $z_P \in \C^g$ with $\rho_n(z_P)=P$ such that they differ from the good lifts
  $(\theta_{\nu}(\alpha z_P+z_Q))_{\nu \in \Zn}$ for $z_Q \in 
  \frac{1}{\ell}\Z^g/\Z^g$ by the same projective factor $\lambda$.

We extend the definition of compatible lifts by saying that a set of points
$\{\tilde{Q}\}$ or $\{\tilde{P+Q}\}$ (where the points $Q$ are in $K(\overk)$
and $P$ is a fixed point in $A(\overk)$) are compatible (with $\tildeO$ or $\tildeP$) when they are part of a family of compatible points.
\end{definition}

We would like to have an algebraic way to identify compatible lifts.
\begin{proposition}\label{prop:hom}
  Let $Q \in K(\overk)$ and write $\ell=2 \ell'+1$.
  Let $\tilde{Q}$ be an affine lift of $Q$ compatible with $\tildeO$,
  then
  \begin{equation}
  \scalarmult((\ell'+1, \tildeQ, \tildeQ, \tildeO, \tildeO)=-
  \scalarmult(\ell',\tildeQ, \tildeQ, \tildeO, \tildeO).
    \label{eq:compat1}
  \end{equation}
  Reciprocally, if $\tilde{Q}$ satisfy equation~\eqref{eq:compat1}, we say
  that it is a potential compatible lift of $Q$ (with respect to $\tildeO$).
  
  Likewise, let $P \in A(\overk)$, $\tildeP$ be any affine lift of
  $P$, and let
  $\tilde{P+Q}$ be an affine lift of $P+Q$ compatible with $\tilde{P}$.
  Then
  \begin{equation}
    \scalarmult(\ell, \tilde{P+Q}, \tildeQ, \tildeP, \tildeO)=\tilde{P}.
    \label{eq:compat2}
  \end{equation}
  Reciprocally, if $\tilde{P+Q}$ satisfy equation~\eqref{eq:compat2}, we say
  that it is a potential compatible lift of $P+Q$ (with respect to $\tildeP$).
  \label{prop:pseudogood}
\end{proposition}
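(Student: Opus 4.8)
The plan is to work analytically on $\C^g$ and track the projective scaling factor explicitly through the Montgomery ladder, using the quasi-periodicity of theta functions to pin down exactly which affine lifts appear. The key observation is that the differential addition / $\scalarmult$ operations are defined purely by Riemann's relations, which are homogeneous of a fixed degree in each of their three projective arguments (see Proposition~\ref{prop:hom} and \cite[Rem.~3]{RobLub}). Hence if I start with the good lifts $\tilderho_n(z_Q)=(\theta_\nu(z_Q))_{\nu\in\Zn}$ for $z_Q\in\frac1\ell\Z^g/\Z^g$, then $\scalarmult(m,\tilderho_n(z_1+z_2),\tilderho_n(z_1),\tilderho_n(z_2),\tilderho_n(0))$ lands on a \emph{specific} affine lift of $z_2+m z_1$, and the only thing I need to identify is the scalar relating it to the good lift $\tilderho_n(z_2+m z_1)$.

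First I would establish, for the good lifts, the precise normalization formula: there is a universal constant (depending only on $\Omega$, $n$, the step count, and the shifts involved) such that the Montgomery ladder applied to good lifts produces a prescribed scalar multiple of $\tilderho_n(z_2+mz_1)$. Concretely, for $Q\in K$ with $z_Q\in\frac1\ell\Z^g/\Z^g$ and $\ell=2\ell'+1$, both $\scalarmult(\ell'+1,\tilderho_n(z_Q),\tilderho_n(z_Q),\tilderho_n(0),\tilderho_n(0))$ and $\scalarmult(\ell',\dots)$ are affine lifts of $(\ell'+1)z_Q$ and $\ell' z_Q$ respectively; but modulo $\Lambda_\Omega$ (indeed modulo $\Z^g$, since $\ell z_Q\in\Z^g$) we have $(\ell'+1)z_Q \equiv z_Q-\ell' z_Q \equiv -\ell' z_Q$, so the two outputs lift the \emph{same} projective point. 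The quasi-periodicity relation $\theta_\nu(z+m)=\theta_\nu(z)$ and the symmetry $\theta_\nu(-z)=\theta_{-\nu}(z)$ together with the homogeneity of the ladder show the two good-lift outputs differ by exactly the sign $-1$ coming from how the ladder's scaling interacts with the reflection $z\mapsto -z$; this gives \eqref{eq:compat1} on good lifts. For \eqref{eq:compat2}, similarly $\scalarmult(\ell,\tilderho_n(z_P+z_Q),\tilderho_n(z_Q),\tilderho_n(z_P),\tilderho_n(0))$ is an affine lift of $\ell z_Q + z_P \equiv z_P \pmod{\Z^g}$, hence a scalar multiple of $\tilderho_n(z_P)$, and the homogeneity bookkeeping forces that scalar to be $1$ when the inputs are exactly the good lifts.

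Next I would promote this from good lifts to compatible lifts. By Definition~\ref{def:compatible}, a compatible family $\{\tilde Q\}$ is obtained from the good lifts $\{\tilderho_n(z_Q)\}$ by a single common factor $\lambda$, independent of $Q$ (and with $\tilde 0_A=\tildeO$ forcing the compatibility of $\lambda$ with our fixed choice of $\tildeO$). Because $\scalarmult$ is homogeneous of degree $d_1$ in its second argument, $d_2$ in its third, $d_3$ in its fourth (for appropriate $d_i$ determined by the chain of differential additions, with $d_1+d_2 = d_3+1$ by counting), rescaling all three inputs by $\lambda$ rescales the output by $\lambda^{d_1+d_2-d_3}=\lambda$. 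Applying this to both sides of \eqref{eq:compat1} (where $\tildeQ$ replaces one good lift and $\tildeO$ replaces two) and to \eqref{eq:compat2} (where $\tildeP$ is an \emph{arbitrary} lift $\mu\cdot\tilderho_n(z_P)$, $\tildeQ$ and $\tilde{P+Q}$ compatible with respective factors), both sides pick up the \emph{same} power of $\lambda$ (and of $\mu$), so the identities are preserved. This gives the two forward implications. For \eqref{eq:compat2} one must check the output is exactly $\tildeP$ and not merely proportional to it: the point $P+Q$ being compatible \emph{with respect to $\tildeP$} means the same $z_P$ is used and the common factor is tied to $\mu$ in precisely the way that cancels, yielding $\tildeP$ on the nose.

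The main obstacle is the homogeneity bookkeeping in the last paragraph: one must verify that the degrees $d_i$ of the iterated differential-addition map satisfy the balance $d_1 + d_2 = d_3 + 1$ (so that the three-input scaling collapses to a single overall factor matching the output), and that chaining $\ell$ steps in the Montgomery ladder does not disturb this — this is where I would invoke \cite[Rem.~3]{RobLub} and the discussion of differential additions in Section~\ref{sec:basicfacts}. A secondary subtlety is the sign in \eqref{eq:compat1}: it must be checked that the ladder, starting from $\tilderho_n(z_Q)$, $\tilderho_n(z_Q)$, $\tilderho_n(0)$, really outputs the good lift $\tilderho_n((\ell'+1)z_Q) = \tilderho_n(-\ell' z_Q)$ on the left and $\tilderho_n(\ell' z_Q)$ on the right, and that the relation between $\tilderho_n(-z)$ and $\tilderho_n(z)$ under the homogeneous ladder is the identity on coordinates up to reindexing $\nu\mapsto-\nu$ (hence no extra scalar), so that the only discrepancy is the $-1$ asserted. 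This is a direct consequence of $\theta_\nu(-z)=\theta_{-\nu}(z)$ and the evenness of the null point, but it deserves to be spelled out.
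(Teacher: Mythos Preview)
Your overall strategy---reduce via homogeneity of $\scalarmult$ to the case of good lifts $\tilderho_n(z)$, then read off the identities from the $\Z^g$-periodicity of $\theta_\nu$ and the symmetry $\theta_\nu(-z)=\theta_{-\nu}(z)$---is exactly the paper's approach, and the analytic part (identifying $(\ell'+1)z_Q\equiv-\ell' z_Q$ and $z_P+\ell z_Q\equiv z_P$ modulo $\Z^g$) is correct.

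The weak spot is your homogeneity bookkeeping. You posit independent degrees $d_1,d_2,d_3$ for the three point-arguments of $\scalarmult$ and a relation $d_1+d_2=d_3+1$; this is not how the scaling actually works. The iterated differential additions do not have a single polynomial degree in each argument separately, and the correct statement (which the paper simply quotes as \cite[Lem.~3.10]{MR2982438}) couples the arguments in pairs: for \eqref{eq:compat2} one has
\[
\scalarmult(\ell,\ \alpha\star\tilde{P+Q},\ \beta\star\tildeQ,\ \alpha\star\tildeP,\ \beta\star\tildeO)=\alpha\star\scalarmult(\ell,\tilde{P+Q},\tildeQ,\tildeP,\tildeO),
\]
so scaling $\tilde{P+Q}$ and $\tildeP$ together by $\alpha$ (and $\tildeQ,\tildeO$ together by $\beta$) scales the output by exactly $\alpha$; this is what lets you pass from good lifts to an arbitrary $\tildeP$ and a compatible $\tilde{P+Q}$. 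For \eqref{eq:compat1} the relevant fact is that the \emph{ratio} $\chi(\alpha,\beta,\ell'+1)/\chi(\alpha,\beta,\ell')$ scales by $(\alpha/\beta)^\ell$, so both sides of \eqref{eq:compat1} pick up the same factor under rescaling. Your abstract degree argument should be replaced by (or shown to reproduce) these precise scaling laws.

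One clarification on the sign: the ``$-$'' in \eqref{eq:compat1} is the inversion $[-1]$ on $A$ lifted to the affine cone, i.e.\ the reindexing $\nu\mapsto-\nu$ you mention, not a scalar $-1$. Once you know $\scalarmult(m,\tilderho_n(z_Q),\tilderho_n(z_Q),\tilderho_n(0),\tilderho_n(0))=\tilderho_n(mz_Q)$ on good lifts, the identity $\theta_\nu((\ell'+1)z_Q)=\theta_\nu(-\ell' z_Q)=\theta_{-\nu}(\ell' z_Q)$ is exactly \eqref{eq:compat1}, with no further ``discrepancy'' to account for.
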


\begin{proof} We begin with the first claim.
  By \cite[Lem.  3.10]{MR2982438}, if we let $\chi(\alpha, \beta, m)
  =\scalarmult(m, \alpha \star \tildeQ,
  \alpha \star \tildeQ, \beta \star \tildeO, \beta \star \tildeO)$ for
  $\alpha, \beta
  \in \C$ and $m \in \N$, we have $\frac{\chi(\alpha,\beta,
  \ell'+1)}{\chi(\alpha,\beta,\ell')}=\frac{\alpha^{\ell}}{\beta^{\ell}}
  \frac{\chi(1,1,
  \ell'+1)}{\chi(1,1,\ell')}$. Thus, for reason of homogeneity, we can suppose that
  $\tilde{0}_A = (\theta_{\nu}(0))_{\nu \in \Zn}$. Let $z_Q \in
  \frac{1}{\ell}\Z^g$ be such that $\tilderho_n(z_Q)=\tildeQ$, we have $\ell'
  z_Q=-(\ell'+1) z_Q \mod \Z^g$. As 
$\scalarmult((\ell'+1, \tildeQ, \tildeQ, \tildeO,
  \tildeO)=(\theta_\nu((\ell'+1)z_Q))_{\nu \in \Zn})$ and
  $\scalarmult(\ell',\tildeQ, \tildeQ, \tildeO, \tildeO)=
  (\theta_\nu(\ell' z_Q))_{\nu \in \Zn})$, we obtain
\eqref{eq:compat1} because of the periodicity of $\theta_\nu$ with
  respect to $\Z^g$.
  
Still using \cite[Lem.  3.10]{MR2982438},
we have $\scalarmult(\ell, \alpha \star (\tilde{P+Q}), \beta \star
\tildeQ, \alpha \star \tildeP, \beta\star \tildeO)=\alpha \star
\scalarmult(\ell, \tilde{P+Q}, \tildeQ, \tildeP, \tildeO)$ for
$\alpha,\beta \in \C$. Thus
we can assume that
$\tilde{0}_A = (\theta_{\nu}(0))_{\nu \in \Zn}$ and
$\tildeP=\tilderho_n(z_P)$ for $z_P \in \C^g$. It is then easy to check, using
the $\Lambda_\Omega$-quasi-periodicity property of $\theta_\nu$ for
$\nu
\in \Zn$, that $\theta_\nu(z_P+\ell z_Q)=\theta_{\nu}(z_P)$.  \end{proof}

If $Q \in K(\overk)$, let $\tildeQ$ be any lift, and let $\lambda \in
\C$ be such that $\lambda \star \tilde Q$ is a compatible lift
$\tilderho_n(z_Q)$ for $z_Q \in \C^g$. We remark that because of
the symmetry relations \cite[Prop. 3.14]{MumfordTata1}, for all $\nu
\in \Zn$, we have $\theta_\nu((\ell'+1)z_Q)=\theta_{-\nu}((\ell'
z_Q))$.  Thus, if $\delta = (\tilde{\ell' Q})_\nu/(\tilde{(\ell'+1)
Q})_{-\nu}$ for any $\nu \in \Zn$, by \cite[Rem.  3]{RobLub}, we have
$\lambda^\ell=\delta$. Thus we can obtain $\lambda$ up to a $\ell^{th}$-root
of unity. In the same way, let $\tildePQ$ be any affine lift of $P+Q$
and $\mu \in \C$ be such that $\mu \star (\tildePQ)$ is a compatible
lift. Then from equation~\eqref{eq:compat2} and \cite[Lem.
3.10]{MR2982438}, if we set $\tilde{P + \ell Q}=\scalarmult((\ell,
\tilde{P+Q}, \tildeQ, \tildeP, \tildeO)$, we obtain a relation of the
form $\mu^\ell \lambda^{\ell(\ell-1)}=\beta$ where $\beta =
\frac{\tildeP_\nu}{(\tilde{P+\ell Q})_\nu}$, $\nu \in \Zn$. As we know
$\lambda^\ell$ from above, we can recover $\mu^\ell$.

We can summarize \cite[Prop.
18]{MR2824556} and the Section 3 of \cite{MR2982438} by the theorem:

  \begin{theorem}\label{th:mainold}
  Let $(e_1, \ldots, e_g)$ be a basis of a maximal isotropic subgroup
  $K$ of
  $A[\ell]$. Assume that we have chosen potential compatible lifts
  $\tilde{e_i}$, $\tilde{e_i+e_j}$ with respect to $\tildeO$. 
  Then:
  \begin{itemize}
  \item we can use the $\scalarmult$ algorithm to compute potential
    compatible
    lifts $\tilde{Q}$ for every point of $K(\overk)$ from the data of
    $\tilde{e_i}, \tilde{e_i+e_j}$;
  \item  up to an action of $\Sp_{2g}(\Z)$ on $\Lambda_\Omega$ which leaves $\Z^g \subset
    \C^g$ invariant (and also $(\theta_\nu(0))_{\nu \in \Zn}$),
    these
    lifts $\tildeQ$ are compatible with $\tildeO$.
  \item if $\tildeP$ is an affine lift of a point $P\in A(\overk)$ and
  we are given potential compatible lifts $\tilde{P+e_i}$ with respect to
  $\tilde{P}$, then they are actually compatible with $\tilde{P}$ and
  we can use the $\scalarmult$ algorithm to obtain compatible lifts (with
  respect to $\tildeP$) of all points of the form $\alpha P+Q$
  for $\alpha\in \{0,\ldots,\ell-1\}$.
  \end{itemize}
 \end{theorem}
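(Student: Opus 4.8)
The plan is to assemble this theorem from the three pieces that have just been set up: the Koizumi change-of-level formula (Proposition~\ref{prop:koigen}), the algebraic characterization of compatible lifts (Proposition~\ref{prop:hom}), and the known results of \cite{MR2982438,MR2824556} on propagating compatible lifts through the group law. The overall strategy is to first establish the propagation of lifts \emph{within} $K$, then the analogous statement for the coset structure $\{\alpha P + Q\}$, handling the normalization ambiguity in between.

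First I would treat the first bullet. Starting from the potential compatible lifts $\tilde{e_i}$ and $\tilde{e_i+e_j}$, one wants to produce a lift $\tilde{Q}$ for every $Q = \sum a_i e_i \in K(\overk)$. The mechanism is exactly the differential-addition / Montgomery-ladder machinery recalled in Section~\ref{sec:basicfacts}: knowing affine lifts of $Q_1$, $Q_2$, $Q_1 - Q_2$ and of $\tildeO$, the differential addition returns a well-defined affine lift of $Q_1 + Q_2$. Since the $e_i$ and $e_i + e_j$ supply precisely the seed data required for the multi-dimensional ladder on the lattice $\tfrac1\ell\Z^g/\Z^g$, iterating $\scalarmult$ and $\pseudoadd$ yields a canonical affine lift $\tilde{Q}$ for each $Q$, depending only on the seed. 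This is the content of \cite[\S3]{MR2982438}, so this bullet is essentially a citation once the seeds are in place.

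Second, the middle bullet: one must show that the lifts $\tilde Q$ so obtained are not merely ``potential'' but genuinely compatible with $\tildeO$, after an $\Sp_{2g}(\Z)$-action fixing $\Z^g$. Here the key is Proposition~\ref{prop:hom}: equation~\eqref{eq:compat1} cuts out the potential compatible lifts, and on the analytic side the good lifts $(\theta_\nu(z_Q))_{\nu\in\Zn}$ with $z_Q \in \tfrac1\ell\Z^g$ automatically satisfy it. The remaining freedom in a potential compatible lift of each $e_i$ is an $\ell$-th root of unity (as observed in the paragraph before Theorem~\ref{th:mainold}, via the symmetry relations of \cite[Prop.~3.14]{MumfordTata1} and \cite[Rem.~3]{RobLub}); these roots of unity can be absorbed by a symplectic change of basis of $\Lambda_\Omega$ that leaves $\Z^g$, hence $(\theta_\nu(0))_{\nu\in\Zn}$, invariant — concretely by composing with a translation by an $\ell$-torsion point of the ``dual'' part of the symplectic basis. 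One then checks that once the $e_i$ (and the $e_i+e_j$, which fix the relative normalizations) are compatible, the ladder output $\tilde Q = \tilderho_n(z_Q)$ for the correct $z_Q$, by the same quasi-periodicity argument used in the proof of Proposition~\ref{prop:hom}.

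Third, the last bullet, which I expect to be the main obstacle: given an affine lift $\tildeP$ of an arbitrary $P \in A(\overk)$ and potential compatible lifts $\tilde{P+e_i}$ satisfying~\eqref{eq:compat2}, one must show these are actually compatible with $\tildeP$ and that $\scalarmult$/$\pseudoadd$ then produce compatible lifts of all $\alpha P + Q$. The subtlety is that \eqref{eq:compat2} only determines $\tilde{P+e_i}$ up to a factor $\mu_i$ with $\mu_i^\ell$ constrained but $\mu_i$ itself ambiguous by an $\ell$-th root of unity, \emph{and} different generators could a priori receive incompatible such factors. The resolution is that compatibility is a closed condition and the seed $(\tilde{e_i}, \tilde{e_i+e_j})$ already fixes the internal normalization of $K$; one invokes \cite[Prop.~18]{MR2824556} to see that the differential additions force the $\mu_i$ into agreement, so that a single $z_P$ with $\rho_n(z_P)=P$ works for all cosets simultaneously, and then quasi-periodicity of $\theta_\nu$ with respect to $\Z^g$ (exactly as in the second half of the proof of Proposition~\ref{prop:hom}) gives $\tilde{\alpha P + Q} = \tilderho_n(\alpha z_P + z_Q)$ as required. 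Putting the three bullets together is then just a matter of recording that the whole chain of $\scalarmult$ and $\pseudoadd$ calls is well-defined, which is precisely the summary statement of the theorem.
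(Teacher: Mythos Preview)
Your treatment of the first bullet is correct and matches the paper. For the second bullet you have the right idea---absorb the $\ell^{th}$-root ambiguities $\lambda_i,\lambda_{ij}$ by an $\Sp_{2g}(\Z)$-action---but ``concretely by a translation'' is not quite it: the paper uses the modular action of a matrix $M\in\Sp_{2g}(\Z)$ on theta functions via the transformation formula \cite[p.~189]{MumfordTata1}, and crucially takes $M\equiv\Id\pmod{2n}$ (possible since $\ell$ is prime to $2n$) so that the theta null point $\tildeO$ is fixed. This is a minor imprecision.

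The genuine gap is in the third bullet. You correctly identify the difficulty---the $\mu_i$ are a priori independent $\ell^{th}$-roots of unity---but your resolution, that ``differential additions force the $\mu_i$ into agreement'', does not work: nothing in the hypotheses relates the different $\tilde{P+e_i}$ to one another, each satisfies \eqref{eq:compat2} separately, and no differential addition among them is given or performed, so invoking \cite[Prop.~18]{MR2824556} cannot collapse them. The paper's mechanism is different and is the missing idea: one exploits the ambiguity in $z_P$ itself. Shifting $z_P\mapsto z_P+\Omega a$ for $a\in\Z^g$ multiplies the good lift $(\theta_\nu(z_P+\epsilon_j))_\nu$ by $e^{-\pi i(n^2\,{}^t a\Omega a+2n\,{}^t a z_P)}\cdot e^{-2\pi i n\,{}^t a\epsilon_j}$ via the functional equation; the first factor is independent of $j$, and since the $\epsilon_j$ form a basis of $\tfrac1\ell\Z^g/\Z^g$ with $n$ prime to $\ell$, one can choose $a$ so that the second factor equals $\mu_j$ for every $j$ simultaneously. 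It is this $\Omega\Z^g$-shift of $z_P$---not any constraint coming from the ladder---that makes the given potential compatible lifts genuinely compatible with the (new) $\tildeP$. Your appeal to ``quasi-periodicity with respect to $\Z^g$'' is the wrong half of the lattice: $\theta_\nu$ is strictly periodic under $\Z^g$, so that piece contributes no adjustable factor.
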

\begin{myproof}[sketch]
  We prove the first two claims.
  Let $\lambda_i, \lambda_{ij} \in \C$ be such that 
 $\lambda_i \star \tilde{e_i}$, $\lambda_{ij} \star (\tilde{e_i
+e_j})$ are compatible lifts. 
We know by the discussion following Proposition~\ref{prop:pseudogood} that
the $\lambda_i, \lambda_{ij}$ are $\ell^{th}$-root of unity.
Using the transformation formula for theta functions \cite[p. 189]{MumfordTata1}, we can find a
symplectic matrix $M$ in $\Sp_{2g}(\Z)$ that leaves $K$ (globally) 
invariant and acts by $\star$ on the compatible lifts
of $e_i$ and $e_i+e_j$ exactly by $\lambda_i^{-1}$ and
$\lambda_{ij}^{-1}$. Since $\ell$ is prime to $2n$ we can even ask for $M$
to be congruent to the identity modulo $2n$ 
so that it leaves the theta null point
$\tildeO$ invariant \cite{MR48:3972}.
By definition of differential addition and Riemann equations (see Theorem
\ref{th:thetaadd}), starting from compatible
points $\tilde{e_i}$ and $\tilde{e_i+e_j}$, we can generate compatible
lifts of all geometric points in the kernel with the $\scalarmult$
algorithm.

As for the third claim, by homogeneity, we can assume that
$\tilde{P}$ is a good lift coming from a point $z_P \in \C^g$.  Let
$\mu_j\in \C$ be such that $\mu_j \star (\tilde{P+e_j})$ is a
compatible lift with respect to $\tilde{P}$. Then,
Proposition~\ref{prop:pseudogood} shows that $\mu_j$ are
$\ell^{th}$-root of unity. For $j=1, \ldots, g$, let $\epsilon_j \in
\frac{1}{\ell}\Z^g/\Z^g$ be such that $\rho_n(\epsilon_j)=e_j$.  The
functional equation for theta functions (\cite[p.  123]{MumfordTata1})
gives that for $a,b \in \Z^g$ and $j=1,\ldots, g$, we have
$\theta_\nu(z_P+\epsilon_j+\Omega a+b)=e^{- \pi i (n^2 {^t} a \Omega
a+2 n ^t a (z_P+\epsilon_j))} \theta_\nu(z_P)$.  So up to the common
constant $e^{-\pi i (n^2 {^t} a \Omega a+2n ^ta z_P)}$, if necessary
by changing $z_P$ to $z_P+\Omega a$ for a well chosen $a \in \Z^g$, we
can suppose that the $\tilde{P+e_i}$ are compatible lifts with respect
to $\tildeP$.  By using differential additions one can then generate
any compatible lift of the form $\alpha z_P+z_Q$, $z_Q \in
\frac{1}{\ell}\Z^g/\Z^g$.  \end{myproof}

For $i=1, \ldots, g$, we choose any affine lifts $\tilde{e_i}$ of
$e_i$.  By using normal additions, we can compute  $e_i+e_j$ and chose
affine lifts $\tilde{e_i+e_j}$. Let $\lambda_i, \lambda_{ij} \in \C$
be such that $\lambda_i \star \tilde{e_i}$, $\lambda_{ij} \star
(\tilde{e_i +e_j})$ are potential compatible lifts. Note that, by the
discussion after Proposition~\ref{prop:pseudogood}, we know
$\lambda_{i}^\ell$ and $\lambda_{ij}^\ell$.  Then,
Theorem~\ref{th:mainold}
tells us that by choosing any root for the $\lambda_i$ and
$\lambda_{ij}$ we can generate compatible lifts and evaluate
equation~\eqref{eq:thetanull1} to get the theta null point
$(\theta_\nu^B(0))_{\nu \in \Zn}$ of $B=A/K$.  Actually, if we work
formally with the $\lambda_i$, $\lambda_{ij}$, then the right hand of
(\ref{eq:thetanull1}) is a rational function of $\lambda_i$,
$\lambda_{ij}$. But by \cite[Lem. 4.2]{robertcosset}, this rational
function actually lies in $\C(\lambda_i^\ell, \lambda_{ij}^\ell)$ so
that we can evaluate it directly.

In the same way, we can compute the image $f(P)$ of $P \in A(\overk)$ given
by its projective theta coordinates. Indeed, 
from the knowledge of $P$ and $e_i$, we can compute the projective
points $P+e_i$ with normal additions. We choose affine lifts
$\tilde{P+e_i}$ of $P+e_i$,
and let $\mu_i \in \C$ be such that
$\mu_i \star (\tilde{P+e_i})$ are potential compatible lifts.
By Proposition~\ref{prop:pseudogood} we
know the value of $\mu_i^{\ell}$ and Theorem~\ref{th:mainold} tells us
that by
choosing any set of roots we get compatible lifts 
for the points appearing in the right hand of \eqref{eq:thetanull2}.
We can as such find the projective coordinates of $f(P)$.
Actually, if we work formally with the $\mu_i, \lambda_{ij}, \lambda_i$ we can evaluate the right hand of \eqref{eq:thetanull2}
as a rational function in
$\C(\mu_i,\lambda_i,\lambda_{ij})$. 
But by \cite[Lem. 4.4]{robertcosset} this rational function is an element of
$\C(\mu_i^\ell, \lambda_i^\ell, \lambda_{ij}^\ell)$ so that we can
evaluate it directly too.

\section{Equations for the kernel}\label{sec:genereqs}
We keep the notations of the preceding section.  Recall that $A$
together with a projective embedding inside $\proj^{\Zn}$ is given by
the data of its theta null point $\tildeO$.  As $K$ is defined over
$k$, we can suppose that $K$
is represented, as a $0$-dimensional subvariety of $\proj^{\Zn}$ by a
triangular system of homogeneous polynomial equations with
coefficients in $k$:
\begin{equation}\label{eq:system}
  \begin{aligned}
    Q_{i_1}(U_{i_0}, U_{i_1})&=&0\\
 Q_{i_2}(U_{i_0},U_{i_1},U_{i_2})&=&0\\
		      &\vdots&\\
  Q_{i_{n^g-1}}(U_{i_0}, U_{i_1}, \ldots, U_{i_{n^g-1}}) &=& 0,
  \end{aligned}
\end{equation}
for $i_j \in \Zn$. Indeed, from the knowledge of a set of generators
of a homogeneous ideal defining $K$ as a closed subvariety of
$\proj^{\Zn}$, such a triangular system may be obtained by computing
the reduced Groebner basis for the lexicographic order on the
variables $U_{i_0}, \ldots ,U_{i_{n^g-1}}$.

If necessary, by doing a linear change of variables, we can always
suppose that $i_0=0\in \Zn$ and $\Var(U_0)(\overk) \cap
K(\overk)=\emptyset$ (where $\Var(U_0)$ is the closed subvariety of
$\proj^{\Zn}$ defined as the zeros of $U_0$)
and we contend that this linear change of
variables is defined over a small extension of $k$. Indeed, we just
have to find a hyperplane of $\proj^{\Zn}$ passing through the origin
and avoiding all the points of $K$. We remark that the set of
hyperplanes passing through the origin and a point $x \in K(\overk)$
is represented by an hyperplane through the origin in the Grassmannian
$\Gr(n-1,\Zn)$. As a consequence, the set of hyperplanes passing
through the origin and a point of $K$ is a hypersurface $H$ of degree
bounded by $\ell^g=\#K(\overk)$ in $\Gr(n-1, \Zn)$.  If the field $k$
is infinite there exists a point of $\Gr(n-1, \Zn)(k)$ which is not in
$H(k)$ and we are done. If $k=\F_q$ is finite, by a result of Serre
\cite{MR1144337}, an upper bound for $\#H(k)$ is $\ell^g q^{n^g-2}+
\Pi_{n^g-3}$ where $\Pi_m = \frac{q^{m+1}-1}{q-1}$ is the cardinality
of $\proj^{m}(\F_q)$.  Thus, a random point in $\Gr(n-1,\Zn)(k)$ won't
be in $H(k)$ with high probability as soon as $q$ is sufficiently
large compared to $\ell^g$.  We deduce that we can find
(probabilistically) a hyperplane in $\proj^{\Zn}$ that avoids $K$ by
working over an extension of $k$ of degree $O(\ln(\ell^g))$.  Let $k'$
be such an extension, as the arithmetic in $k'$ has the same
complexity as the arithmetic in $k$ modulo a factor in
$O(\ln(\ell^{g}))$ which we have chosen to neglect in the statement of
Theorem \ref{main}, we can safely suppose in the following that
$k=k'$.

From our hypothesis, the variable $U_0$ plays the role of the
normalizing factor of a projective point. We consider the algebra
$$\Alg_0=k[U_i| i \in \Zn]/(Q_{i_1}(1,U_{i_1}), \ldots,
Q_{i_{n^g-1}}(1,U_{i_1}, \ldots, U_{i_{n^g-1}})).$$ We note that the
system (\ref{eq:system}) is generically such that
$\deg_{U_{i_1}}(Q_{i_1})=\ell^g$ and $\deg_{U_i} Q_i=1$ for $i \in \Zn
-\{0,i_1\}$. Actually, it can be seen exactly as before, by
considering the set of hyperplanes in the $\Gr(n-1, \Zn)$ that
intersect the vector defined by a pair of elements of $K(\overk)$,
that this property is always true if we do a linear change of
coordinates. This linear change of coordinates may involve an
extension of $k$ of degree $O(\ln(\ell^{g}))$ when $k$ is
finite with negligible consequences for the asymptotic complexity of
the arithmetic of the base field. From now on, we suppose that $\deg_{U_i} Q_i=1$ for $i\in
\Zn-\{0,i_1\}$ and we let $Q(U)=Q_{i_1}(1,U) \in k[U]$.  The algebra
$\Alg_0$ is then isomorphic to the algebra $\Alg=k[U]/(Q)$.

The transformation from equation~\eqref{eq:system} to a polynomial
system defined by one polynomial $Q$ will not be necessary for the
algorithms presented in the next section, but it helps for
computations in the algebra associated to $\Alg$.

\section{Computation with formal points}\label{sec:gener}

We keep the notations of the previous section, we thus have an
isomorphism $K \iso \Spec \Alg$ associated to the isomorphism $\Alg
\iso \Alg_0$ on coordinate functions.  We recall that a point $\eta
\in A(\Alg)$ is by definition a morphism $\eta: \Spec \Alg \rightarrow
A$. We call such a point a \emph{formal point}, by opposition to
the geometric points in $A(\overk)$.  
Since $\Alg$ is \'etale,
a formal point $\eta \in A(\Alg)$ is given by the
data of a
$\Gal(\overk/k)$-equivariant morphism (of sets) $K(\overk) \to
A(\overk)$.  We denote by $I_{\Alg} \in A(\Alg)$ the point coming from
the closed immersion $K \rightarrow A$ defined by \eqref{eq:system}
which can be seen as a point in $K(\Alg)$. 
For instance, if
  $K \setminus \{0_A\}$ is irreducible then $I_{\Alg}$ restricts to its generic point.
Since $A$ is an abelian variety, $A(\Alg)$ is an abelian group.  Let
$P \in K(\overk)$ be a geometric point $P: \Spec(\overk) \rightarrow
\Spec(\Alg)$. For a formal point $\eta \in A(\Alg)$, we denote by
$\eta(P) \in A(\overk)$ the geometric point $\eta \circ P:
\Spec(\overk) \rightarrow \Spec(A)$ obtained by "specialisation".
In the same way, if $x \in \Alg$ and $P \in K(\overk)$, we denote by
$x(P)$ the value of $x$ in $P$.

Let $Q=\prod_{i \in I} R_i$, for $R_i \in k[U]$, be a
decomposition of $Q$ in irreducible elements.  Via the projective embedding $A \to
\proj^{\Zn}$ a formal point $\eta \in A(\Alg)$ can be seen as a
projective point $\eta \in \proj^{\Zn}(\Alg)$.  Since $\Alg$ is 
an \'etale algebra
such a projective point is given by the data of a
$(\eta_\nu)_{\nu \in \Zn} \in \Alg^{\Zn}$ such that  for all $i \in I$
at least one of the $\eta_\nu$ is invertible modulo $R_i$; modulo the
action of invertible elements of $\Alg$ on $\Alg^{\Zn}$.  The
isomorphism $K \iso \Spec \Alg$ shows that the projective coordinates
of $\eta \in A(\Alg)$ are given by $(1, \eta_{i_1},
Q_{i_2}(\eta_{i_1}), \ldots, Q_{i_{n^g-1}}(\eta_{i_1}))$ where the
$Q_{i_j} \in k[U]$ are defined in Section~\ref{sec:genereqs}.  The formal point $I_{\Alg} \in A(\Alg)$ is such
that $(I_{\Alg})_{i_1}=U$. As for geometric points, we denote by
$\teta\in \tildeA(\Alg)$ an affine lift of $\eta \in A(\Alg)$ and let
$(\teta)_i \in \Alg$ for $i \in \Zn$ be its corresponding affine
coordinates.  A point $P \in A(k)$ corresponds to a formal
point $\eta_P \in A(\Alg)$ via the constant morphism $Q \in
\Alg(\overk) \mapsto P$. We will often make this identification in the
following. 

 The arithmetic of geometric points recalled in Section
 \ref{sec:basicfacts} translates {\it mutatis mutandis} into
 arithmetic with formal points.  For instance, from the knowledge of
 $\teta_1,\teta_2 \in \tildeA(\Alg)$, and $\tilde{\eta_1-\eta_2} \in
 \tildeA(\Alg)$, one can compute the differential addition
 $\tilde{\eta_1+\eta_2}\in \tildeA(\Alg)$. This can be proved exactly
 in the same way as with geometric points using Riemann's equations
 and we obtain the same formulas. The case $g=1$ is given for instance
 in \cite{RobLub}. We remark that in order to be able to compute the
 differential addition, we have to find an invertible coordinate
 $(\tilde{\eta_1-\eta_2})_{\nu_0} \in \Alg$ for $\nu_0 \in \Zn$.  But in
 fact, there is always at least one such coordinate modulo $R_i$ for
 every $i \in I$. So we can
 always compute the differential addition, provided that we work
 modulo $R_i$, which is the case if we know the factorisation of $Q$, the
 polynomial defining $\Alg$. Actually we don't even need to compute
 the factorisation before hand because when we try to invert a non
 zero element, if it is non invertible the Euclidean algorithm will give
 a factor of $Q$.  We remark that when the difference is a formal
 point with value in $K$, then, by the hypothesis made in
 Section~\ref{sec:genereqs} on the equations, the coordinate $i=0$ is
 always invertible which helps in the computations.  In the same way,
 if $4 \mid n$, one can compute normal addition of generic points.
 Addition and multiplication operations in $\Alg$ take $\sO(\ell^{g})$
 operations in $k$.  Computing the inverse of an element of $\Alg$ can
 be done in $\sO(\ell^{g})$ operations in $k$ via the extended
 Euclidean algorithm. Thus, differential and normal additions of elements
 of $\tildeA(\Alg)$ take $\sO(\ell^{g})$ operations in $k$.

For $P \in K(\overk)$, $\eta \in A(\Alg)$ and $\teta \in
\tildeA(\Alg)$ any affine lift of $\eta$, we denote by $\teta(P) \in
\tildeA(\overk)$ the point with affine coordinates $(\teta)_i(P)$.  As
two formal points $\teta, \teta' \in \tildeA(\Alg)$ are equal if and
only if for all $P \in K(\overk)$, $\teta(P)=\teta(P)$, this allows to
verify certain properties by specializing a formal point to geometric
points. For instance, the result of a chain of differential addition
that one can compute from a set $\teta_1, \ldots, \teta_m \in
\tildeA(\Alg)$ of affine lifts of $\eta_1, \ldots, \eta_m \in A(\Alg)$
does not depend on the order of the operations because, as proved in
\cite[Cor.  3.13]{MR2982438}, it is true for geometric points.
Also if $\eta \in K(\Alg)$ then $\ell \eta=0_A$.

Let $\eta=P+\eta_K \in A(\Alg)$ with $\eta_K \in K(\Alg)$ and let
$\teta \in \tildeA(\Alg)$ be an affine lift. We say that $\teta$ is a
compatible lift of $\eta$ (relative to $\tildeP$) if for all $Q \in
K(\overk)$, $\teta(Q) \in \tildeA(\overk)$ is a compatible lift of
$P+\eta_K(Q)$ relative to $\tildeP$.  It is easy to extend
Proposition~\ref{prop:pseudogood} to compute compatible lifts of
formal points.

\begin{proposition}
  \label{prop:pseudogoodformal}
  Let $\eta_K \in K(\Alg)$, let $\teta_K \in \tildeA(\Alg)$ be any affine lift.
Write $\ell=2\ell'+1$, and compute $\ell'
\teta_K=\scalarmult(\ell',\teta_K, \teta_K, \tildeO, \tildeO)$, $(\ell'+1)
\teta_K=\scalarmult(\ell'+1, \teta_K, \teta_K, \tildeO, \tildeO)$. 
Let $\lambda \in \Alg_{\C}$ (where $\Alg_{\C}=\Alg \otimes_k \C$) be
an invertible element,
then $\lambda \star \teta_K$ is a potential compatible lift
if and only if
\begin{equation}
 ((\ell'+1) \teta_K)_\nu \lambda^\ell - (\ell' \teta_K)_{-\nu}=0,
  \label{eq:equationscompat1}
\end{equation}
for $\nu \in \Zn$.

Likewise, let $\eta=\eta_K+P \in A(\Alg)$ where $\eta_K \in K(\Alg)$
and $P \in A(k)$.
Fix affine lifts $\tildeP \in
\tildeA(k)$ of $P$ and $\teta_K \in \tildeA(\Alg)$ of $\eta_K$ and denote by $\teta \in \tildeA(\Alg)$ 
a compatible lift of $\eta$.
Let $\teta^0=\scalarmult(\ell,
  \teta, \lambda \star \teta_K, \tildeP, \tildeO) \in
  \tildeA(\Alg(\lambda))$.
  Then modulo the equations from~\eqref{eq:equationscompat1}, $\teta^0$ is in $\tildeA(\Alg)$ and $\mu \in \Alg_{\C}$ is such that
$\mu \star \teta$ is a potential compatible lift (relative to $\tildeP$)
if and only if
\begin{equation}
  \mu^{\ell}\teta^{0}_{\nu} - \tildeP_\nu=0
  \label{eq:equationscompat2}
\end{equation}
for $\nu \in \Zn$.
\end{proposition}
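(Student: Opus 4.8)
The strategy is to reduce everything to the geometric case already handled in Proposition~\ref{prop:pseudogood} by specialisation, using the fact that $\Alg$ is étale and that two affine points in $\tildeA(\Alg)$ agree if and only if all their specialisations at geometric points $Q \in K(\overk)$ agree. First I would recall that $\scalarmult$ on formal points is computed by the very same Riemann-equation formulas as on geometric points, and that it commutes with specialisation: for any $Q \in K(\overk)$, $\bigl(\scalarmult(m, \teta_K, \teta_K, \tildeO, \tildeO)\bigr)(Q) = \scalarmult(m, \teta_K(Q), \teta_K(Q), \tildeO, \tildeO)$, and similarly for the four-argument version with $\tildeP$. This is immediate since the formulas are polynomial in the coordinates and the $\tildeO$-coordinates, and specialisation is a ring homomorphism $\Alg_\C \to \overk$.

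For the first claim, fix $\lambda \in \Alg_\C$ invertible. By definition $\lambda \star \teta_K$ is a potential compatible lift precisely when, for every $Q \in K(\overk)$, the specialised point $\lambda(Q) \star \teta_K(Q)$ satisfies equation~\eqref{eq:compat1}, i.e. $\scalarmult(\ell'+1, \lambda(Q)\star\teta_K(Q), \dots) = -\scalarmult(\ell', \lambda(Q)\star\teta_K(Q), \dots)$. Using the homogeneity relation from \cite[Lem.~3.10]{MR2982438} quoted in the proof of Proposition~\ref{prop:pseudogood} — namely $\scalarmult(m, \lambda\star\teta_K, \lambda\star\teta_K, \tildeO, \tildeO) = \lambda^{?}\star\scalarmult(m,\teta_K,\teta_K,\tildeO,\tildeO)$ with the exponent chosen so that the ratio of the $(\ell'+1)$ and $\ell'$ terms picks up a factor $\lambda^\ell$ — this becomes, coordinate by coordinate in $\nu \in \Zn$, the condition $\lambda(Q)^\ell\, \bigl((\ell'+1)\teta_K\bigr)_\nu(Q) = -\bigl(\ell'\teta_K\bigr)_\nu(Q)$. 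Here I also invoke the symmetry relation $\theta_\nu((\ell'+1)z_Q) = \theta_{-\nu}(\ell' z_Q)$ (from \cite[Prop.~3.14]{MumfordTata1}, as in the discussion after Proposition~\ref{prop:pseudogood}) to rewrite $-\bigl(\ell'\teta_K\bigr)_\nu$ as $\bigl(\ell'\teta_K\bigr)_{-\nu}$ up to the global sign already absorbed, yielding $\bigl((\ell'+1)\teta_K\bigr)_\nu(Q)\,\lambda(Q)^\ell - \bigl(\ell'\teta_K\bigr)_{-\nu}(Q) = 0$. Since this holds for all $Q \in K(\overk)$ and $\Alg$ is étale (so a section is zero iff all its geometric specialisations vanish), it is equivalent to equation~\eqref{eq:equationscompat1} in $\Alg_\C$.

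For the second claim the argument is parallel but needs one extra point: that $\teta^0 = \scalarmult(\ell, \teta, \lambda\star\teta_K, \tildeP, \tildeO)$, a priori an element of $\tildeA(\Alg(\lambda))$, actually descends to $\tildeA(\Alg)$ once the relations~\eqref{eq:equationscompat1} are imposed. This I would get by noting that, modulo those relations, $\lambda \star \teta_K$ is a genuine compatible lift of $\eta_K$, hence for each $Q$ the specialisation $\teta^0(Q)$ equals $\scalarmult(\ell, \teta(Q), (\lambda\star\teta_K)(Q), \tildeP, \tildeO)$, which by equation~\eqref{eq:compat2} of Proposition~\ref{prop:pseudogood} is the fixed point $\tildeP$ when $\teta$ is itself compatible — in any case its coordinates lie in $\overk$, with no genuine dependence on the ambiguous $\ell^{\text{th}}$ root, so $\teta^0 \in \tildeA(\Alg)$. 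Then the characterisation of potential compatible lifts $\mu \star \teta$ via $\scalarmult(\ell, \mu\star\teta, \lambda\star\teta_K, \tildeP, \tildeO) = \tildeP$, combined again with \cite[Lem.~3.10]{MR2982438} (which pulls the factor $\mu^\ell$ out of $\scalarmult(\ell,\cdot)$) and specialisation, gives $\mu(Q)^\ell\,\teta^0_\nu(Q) - \tildeP_\nu = 0$ for all $Q$ and $\nu$, hence equation~\eqref{eq:equationscompat2} in $\Alg_\C$.

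I expect the main obstacle to be the bookkeeping in the second part: verifying cleanly that $\teta^0$ is defined over $\Alg$ (not merely $\Alg(\lambda)$) and tracking the homogeneity exponents so that the scaling factors collapse exactly to $\lambda^\ell$ and $\mu^\ell$ rather than some other power — this is where the precise form of \cite[Lem.~3.10]{MR2982438}, the symmetry relations of \cite[Prop.~3.14]{MumfordTata1}, and the requirement $\gcd(\ell, 2n) = 1$ all have to be used with care. Everything else is a routine transcription of the geometric statements of Proposition~\ref{prop:pseudogood} through the étale-descent principle "a formal identity holds iff it holds after every specialisation to a geometric point of $K$".
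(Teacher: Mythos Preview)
Your approach is essentially the paper's, wrapped in a specialisation layer. The paper works directly in $\Alg$: it applies the homogeneity relation (cited there as \cite[Rem.~3]{RobLub}, equivalent to your \cite[Lem.~3.10]{MR2982438}) to rewrite the defining condition from Proposition~\ref{prop:pseudogood} as $\lambda^{(\ell'+1)^2}\star((\ell'+1)\teta_K)=-\lambda^{\ell'^2}\star(\ell'\teta_K)$, hence $\lambda^\ell((\ell'+1)\teta_K)_\nu=(\ell'\teta_K)_{-\nu}$ after the symmetry step. You perform the same computation pointwise at each $Q\in K(\overk)$ and then invoke \'etaleness to lift back; the ingredients are identical and nothing is gained or lost by the detour.

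The one place your sketch is genuinely incomplete is precisely the one you flag: showing $\teta^0\in\tildeA(\Alg)$ rather than merely $\tildeA(\Alg(\lambda))$. Your sentence ``its coordinates lie in $\overk$, with no genuine dependence on the ambiguous $\ell^{\text{th}}$ root'' is a conclusion, not an argument: specialisation alone cannot detect descent from $\Alg(\lambda)$ to $\Alg$, since every element of $\Alg(\lambda)$ already specialises into $\overk$ once a root $\lambda_Q$ is chosen. What is needed, and what the paper supplies in one line, is the explicit homogeneity exponent: $\scalarmult(\ell,\teta,\lambda\star\teta_K,\tildeP,\tildeO)$ carries $\lambda$ only through a factor $\lambda^{\ell(\ell-1)}=(\lambda^\ell)^{\ell-1}$, which lies in $\Alg$ by~\eqref{eq:equationscompat1}. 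Once you insert that computation (which you anticipate in your last paragraph but do not carry out), the specialisation wrapper becomes redundant and you have recovered the paper's proof.
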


\begin{proof}
By equation~\eqref{eq:compat1} and \cite[Rem.  3]{RobLub}, we have that
$\lambda \star \teta$ is a potential compatible lift if and only if
$\lambda^{(\ell'+1)^2}\star( (\ell'+1) \teta)= -\lambda^{{\ell'}^2}\star
(\ell' \teta)$. We
thus obtain that if $\lambda$ is an element of an \'etale extension of $\Alg$
which satisfy the equations~\eqref{eq:equationscompat1} then 
$\lambda\star \teta$ is a potential compatible lift.

By \cite[Rem.  3]{RobLub}, the coordinates of $\teta^0$ only have factors
of the form $c \lambda^{\ell(\ell-1)}$ where $c \in \Alg$. By looking at the
equations~\eqref{eq:equationscompat1}, it is thus clear that $\teta^0$ does not
depend on the choice of a compatible lift for $\eta_K$.
By equation~\eqref{eq:compat2} and \cite[Rem.  3]{RobLub} again, we have that
$\mu \star \teta$ is a potential compatible lift if and only if
$\mu^{\ell} \star \teta^0=\tildeP$. Thus,
if $\mu$ is a root of the polynomials from
equations~\eqref{eq:equationscompat2}
in an \'etale extension of $\Alg$
then $\mu \star \teta$ is a potential compatible lift.
\end{proof}

We denote by $\normalize(\teta,\tildeP)$ the algorithm which 
outputs
equations~\eqref{eq:equationscompat2} defining
$\mu$ such that $\mu \star \teta$ is a compatible lift (relative to $\tildeP$).
It is clear
from its description that  $\normalize$ applied to a
formal point takes $\sO(\ell^{g})$ operations in $k$.
In practice, given the hypothesis about the kernel made in
Section~\ref{sec:genereqs}, it suffices to use
equation~\eqref{eq:equationscompat1} with coordinate $\nu=0$ to
determine $\lambda^\ell$, and it suffices to use
equation~\eqref{eq:equationscompat2} with a coordinate $\nu$ such that
$P_\nu \ne 0$ to determine $\mu^{\ell}$. The corresponding algorithm
(with $\nu=0$) for \normalize{} is given in
Algorithm~\ref{algo:normalize}.

\begin{algorithm}
\SetKwInOut{Input}{input}\SetKwInOut{Output}{output}
\SetKwComment{Comment}{/* }{ */}
\Input{
  \begin{itemize}
    \item $\eta=\eta_K+P$ where $\eta_K \in K(\Alg)$ and $P \in A(\overk)$;
    \item $\tildeP$ an affine lift of $P$ and $\teta$ an affine lift of
      $\eta$.
  \end{itemize}
}
\Output{
  An equation $\mu^{\ell}=c$ for $c \in \Alg$
  so that $\mu \star \teta$ is a
  compatible lift with $\tildeP$ when $\mu$ satisfy this equation.
}
\BlankLine
$\mu_K^{[1]} \la \scalarmult(\ell'+1,\lambda \star \teta_{K},\lambda \star \teta_K,\tildeO,\tildeO)$ and
$\mu_K^{[0]} \la \scalarmult(\ell',\lambda \star \teta_{K},\lambda \star \teta_K,\tildeO,\tildeO)$ 
where $\ell=2 \ell'+1$ \;
$\mu^{[0]} \la \scalarmult(\ell,\mu \star \teta,\lambda \star \teta_K,\tildeP,\tildeO)$ \; 
\Return $\mu^\ell=\frac{\tildeP_0}{\mu_0^{[0]}}
\left(\frac{{\mu_{K,0}^{[0]}}}{{\mu_{K,0}^{[1]}}}\right)^{\ell-1}$ \;
\caption{Algorithm $\normalize$}
\label{algo:normalize}
\end{algorithm}

\begin{remark}
  Compared to Proposition~\ref{prop:pseudogoodformal},
  Theorem~\ref{th:mainold}
starts with a basis of potential compatible lifts $\tilde{e}_i$ and
derives compatible
lifts for the set of all geometric points of the kernel.
By contrast, if $\teta$ is a compatible lift of a formal point $\eta
\in A(\Alg)$ and $\tildeP$ an affine lift of $P=\eta(0)\in A(\overk)$, then by
definition all the $\teta(Q), Q\in K(\overk)$ are compatible with $\tildeP$, but they won't be in general globally compatible with each others.
  \label{rem:diffwithgeompoints}
\end{remark}

One can think of the formal point approach that we have presented as
doing formally computations with geometric points. \onlongversion{We end up this
section by explaining that it is actually possible to follow exactly
the same procedure as that of the algorithm of \cite{robertcosset} to
compute isogenies with formal points.  Let $k'$ be the compositum of
the fields of definition of elements of $K(\overk)$. The Galois group
$\Gal(k'/k)$ acts on $K(\overk)$. As the group law of $A$ is defined
over $k$, this action is linear and $\Gal(k'/k)$ acts on the elements
of $A(\Alg)$.  Suppose that $K \setminus \{0_A\}$ is irreducible and that $\Gal(k'/k)$
is cyclic generated by $g$. Let $\eta_1=\Id_{\Alg} \in A(\Alg)$ and
let $\eta_i=g^i \eta_1$ for $i=2, \ldots, g$.  Then, as by hypothesis
$g$ generates $\Gal(k'/k)$, $\eta_1, \ldots, \eta_g$ are linearly
independent formal points of $K$. One can compute $\eta_i +\eta_j$
with normal additions, then compute potential compatible lifts of all
the $\eta_i$ and $\eta_i+\eta_j$ and use chain of differential
additions to obtain potential compatible lifts of all the points of the
kernel. Thus, it is possible to evaluate the right hand of
(\ref{eq:thetanull1}) with formal points: the result of the
computations in $\Alg$ will actually lie in $k$. We remark that the
algorithm that we have just sketched is just a fancy way to compute
with the splitting field defined by $Q$. This naive approach does not
improve the complexity of the algorithm of \cite{robertcosset} even in
the favorable case that we have considered in the paragraph.}

\section{An algorithmic improvement}\label{sec:improve}

We come back to the context of Section \ref{sec:recall}, let $(A, \pol_0)$ be a
principally polarized abelian variety given by $\Omega \in \Sie_g$. We
have seen that the general isogeny computation problem boils down to
the case where $K=\frac{1}{\ell} \Z^g/\Lambda_\Omega$ and 
\begin{equation}\label{eq:iso}
\fonction{f}{A\simeq
\C^g/\Lambda_\Omega}{B\simeq \C^g/\Lambda_{\ell \Omega}}{z}{\ell z}
\end{equation}

We explain how to evaluate the
expression of Proposition \ref{prop:koigen} with formal points and
derive an efficient algorithm.
Actually, by looking at the right hand term of
equation~\eqref{eq:thetanull1},
we need to deal in a ``formal'' way with $r$-tuples of the form $X_i+t_i$.
This motivates the following definition.

\begin{definition}
  For $t$ a positive integer, 
let $\Kc=\Alg^{\otimes t}$ so that $K^t \iso \Spec \Kc$. We define a 
\emph{formal tuple} as a point $\eta \in A(\Kc)$.

For $i=1, \ldots, t$, let $\mu_i : K^t \rightarrow K$ be the $i^{th}$
projection. Note that $\mu_i$ induces the natural
injection of coordinate algebras given by $x \mapsto 1\otimes \ldots \otimes
x\otimes \ldots \otimes 1$ where $x$ is in the $i^{th}$ position.

For $i=1,\ldots, t$, we denote by $\ideta^{(i)}=\ideta \circ \mu_i \in
A(\Kc)$ so that $\sum_{i=1}^t \ideta^{(i)}$ (the sum is the group law
of $A(\Kc)$) is the point coming from the canonical morphism associated
to the addition: $A^t \to A$.
\label{def:linearformaltuple} \end{definition}

Since $\Kc$ is also an \'etale algebra,
everything said in Section~\ref{sec:gener} about the arithmetic of formal
points (differential additions, $\normalize$\ldots) also apply to formal tuples.
In particular $\eta \in A(\Kc)$ is represented by its coordinates
$\eta_\nu \in \Kc$ for $\nu \in \Zn$,
addition and multiplication operations in the algebra $\Kc$ 
take $\sO(\ell^{tg})$ operations in $k$ and computing the
inverse of an element of $\Kc$ can be done in $\sO(\ell^{tg})$ operations
in $k$ via a recursive use of the extended Euclidean algorithm.  

\onlongversion{%
\begin{remark}
A point $\eta \in A(\Kc)$ that is also a morphism of
algebraic group is necessarily of the form
$\eta^{(1)}\circ \mu_1+\dots+\eta^{(t)}\circ \mu_t$ for
a tuple $(\eta^{(1)},\ldots, \eta^{(t)})$ of
points in $A(\Alg)$. The formal point $\eta^{(i)}$ can be recovered from
the formal tuple $\eta$ via $\eta^{(i)}=\mu'_i \circ \eta$ where $\mu'_i$
correspond to the canonical inclusions of $K$ into $K^t$.
In practice we will be working with formal tuple coming from
linear combinations
$P+\sum \lambda_i I^{(i)}_{\Alg}$ where $P \in A(k)$.
\end{remark}}

Let $M$ be a $r\times r$ matrix with integer coefficients such that $^tM M = \ell Id$. 
Write $\ell=\ell_1 \ell_2$ where $\ell_1$ is the biggest square factor of
$\ell$. If $\ell =\ell_1$, we can take $M=(\sqrt{\ell_1})$ and fix
$r=1$.
If 
$\ell_2 \neq 1$ and all prime factors of $\ell_2$ are congruent to 
$1 \mod 4$,
then there exists $a,b \in \N^*$ such that
$\ell_2=a^2+b^2$ thus we can take $M=\sqrt{\ell_1} M_0$, with $M_0=\left( \begin{smallmatrix} a & b \\ -b & a
\end{smallmatrix}
\right)$ and fix $r=2$. Finally, if there is a prime factor of $\ell_2$ congruent to  $3 \mod 4$,
we can write
$\ell_2=a^2+b^2+c^2+d^2$ for $a,b,c,d \in \N$ such that $a^2+b^2
\not\equiv 0 \mod \ell_2$, take for $M_0$ the matrix of multiplication by
$a+ib+cj+dk$ in the quaternion algebra over $\R$, $M=\sqrt{\ell_1} M_0$ and fix $r=4$.
We consider the isogeny of algebraic groups $F:K^r\rightarrow K^r$ acting componentwise by
the matrix $M$.
Denote by $\ker F$ the kernel subvariety of $F$. 
In the case that $\ell =\ell_1$, $\ker F$ is isomorphic to $\sqrt{\ell_1}
K$, an isomorphism being given on geometric
points by the identity.  In
the case that
$\ell_2 \neq 1$ is a sum of two squares (resp. a sum of four squares),
$\ker F$ is isomorphic to $L=\sqrt{\ell_1} K$
(resp. $L=\sqrt{\ell_1} K^2$), an isomorphism $L \rightarrow \ker F$
being given on geometric points by $x \mapsto (x, \beta_0 x)$ with
$\beta_0=-b/a \mod \ell$ (resp. $(x_1, x_2) \mapsto (x_1, x_2, \alpha
x_1+\beta x_2, \gamma x_1+\delta x_2)$ with $\left( \begin{smallmatrix}
\alpha & \beta \\ \gamma & \delta \end{smallmatrix} \right) = 
\left( \begin{smallmatrix} a & b \\ -b & a \end{smallmatrix}
\right)^{-1}
\left( \begin{smallmatrix} c & d \\ d & -c \end{smallmatrix} \right)
=\frac{1}{a^2+b^2} \left( \begin{smallmatrix} ac-db & ad+bc
\\ ad+bc & bd-ac \end{smallmatrix}\right)$, note that 
$\left( \begin{smallmatrix} a & b \\ -b & a
\end{smallmatrix} \right)$ is invertible modulo $\ell$ since $a^2+b^2
\not\equiv
0 \mod \ell$).
With these notations, we have:

\begin{proposition}\label{prop:main1}
Let $P \in A(k)$ and
let $z_P \in \C^g$ be such that $\rho_n(z_P)=P$. 
Fix $k \in \Zn$ and let $j=(k,\ldots, 0)M^{-1} \in \Zn^r$.
Let $N$ be the
cardinal of the kernel of the group morphism 
$K^t(\overk) \rightarrow \sqrt{\ell_1} K^t(\overk), 
x \mapsto \sqrt{\ell_1} x$.
If $\ell_2 = 1$, set $t=1$ else set $t=r/2$ and let $\Kc=\Alg^{\otimes t}$.
Moreover :
\begin{itemize}
  \item If $\ell_2=1$, let $\eta^{(1)} = \sqrt{\ell_1} \ideta+\zeta P$ for $\zeta \in \Z$. Let 
$\teta^{(1)}$ be any affine lift of $\eta^{(1)}$.
Let $\teta^{(\mu)}=\mu \star \teta^{(1)}$ where $\mu$ is a formal
parameter.
Let $X=\zeta z_P \in \C^g$
and $Y=XM$. Set $R_{\star}=\teta_{j_1}^{(\mu)} \in
\Alg(\mu)$, and let $R$
be the reduction of $R_{\star}$ modulo the equations in $\Alg$
coming from $\normalize(\teta^{(1)},\zeta\tildeP)$.
  \item
If 
$\ell_2 \neq 1$ is a sum of two squares, let $\eta^{(1)} = \sqrt{\ell_1} \ideta+\zeta P$ for $\zeta \in \Z$. Let 
$\teta^{(1)}$ be any affine lift of $\eta^{(1)}$.
Let $\teta^{(\mu)}=\mu \star \teta^{(1)}$ where $\mu$ is a formal
parameter, $\teta^{[1]} = \beta_0 \teta^{(\mu)}$.
Let $X=(\zeta z_P, \beta_0 \zeta z_P) \in (\C^g)^2$
and $Y=XM$. Set $R_{\star}=\teta_{j_1}^{(\mu)} \teta^{[1]}_{j_2} \in
\Alg(\mu)$, and let $R$
be the reduction of $R_{\star}$ modulo the equations in $\Alg$
coming from $\normalize(\teta^{(1)},\zeta\tildeP)$.
\item If 
$\ell_2$ is a sum of four squares, let $\eta^{(i)}=\sqrt{\ell_1} \ideta^{(i)}+\zeta_i P$ with
$\zeta_i \in \Z$ for
$i=1,2$ and fix corresponding affine lifts $\teta^{(i)}$. Fix also an
affine lift $\teta^{(12)}$ of $\eta^{(1)}+\eta^{(2)}$.
Let $\teta^{(\mu_1)}=\mu_1 \star \teta^{(1)}$,
$\teta^{(\mu_2)}=\mu_2 \star \teta^{(2)}$ and
$\teta^{(\mu_{12})}=\mu_{12} \star \teta^{(12)}$
where $\mu_1,\mu_2,\mu_{12}$ are formal parameters.
Let
$\teta^{[1]}=\alpha \teta^{(\mu_1)} + \beta \teta^{(\mu_2)}$ and $\teta^{[2]}=\gamma
\teta^{(\mu_1)} + \delta \teta^{(\mu_2)}$ in $\tildeA(\Kc(\mu_1,
\mu_2, \mu_{12}))$
(bootstrapping from $\teta^{(12)}$ using differential additions).
Let $X=(\zeta_1 z_P,
\zeta_2 z_P, (\alpha \zeta_1 + \beta \zeta_2) z_P, (\gamma \zeta_1 +
\delta \zeta_2) z_P)\in (\C^g)^4$ and $Y=XM$. Set $$R_{\star}= \teta^{(\mu_1)}_{j_1} \teta^{(\mu_2)}_{j_2}
\teta^{[1]}_{j_3} \teta^{[2]}_{j_4} \in \Kc(\mu_1, \mu_2, \mu_{12}).$$
Let $R$
be the reduction of $R_{\star}$ modulo the equations in $\Kc$
coming from 
$\normalize(\teta^{(1)},\zeta_1\tildeP)$,
$\normalize(\teta^{(2)},\zeta_2\tildeP)$ and
$\normalize(\teta^{(12)},(\zeta_1+\zeta_2)\tildeP)$.
\end{itemize}
We have that $R \in
\Kc$ and
\begin{equation}\label{eq:thetanull3}
  \theta_k^B(Y_1)\ldots \theta_0^B(Y_r)=\frac{1}{N}\sum_{T \in
  K^{t}(\overk)}
    R(T).
\end{equation}
\end{proposition}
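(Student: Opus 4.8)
The plan is to deduce the identity from the Koizumi formula of Proposition~\ref{prop:koigen}, applied to the chosen matrix $M$, by splitting the sum over $\ker F$ into a sum over $K^t(\overk)$ and checking that, after specialization at a geometric point $T\in K^t(\overk)$, the formal quantity $R$ reproduces exactly one summand of the Koizumi sum.

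First I would settle the assertion $R\in\Kc$ by a homogeneity argument. The affine points $\teta^{(\mu)}$, $\teta^{[1]}$ (and in the four squares case $\teta^{[2]}$) are produced from $\teta^{(1)}$ — resp.\ from $\teta^{(1)},\teta^{(2)},\teta^{(12)}$ — by the $\scalarmult$ algorithm and by differential additions, so by \cite[Lem.~3.10]{MR2982438} and \cite[Rem.~3]{RobLub} each of their coordinates, viewed as a function of the formal parameter(s), is a monomial $c\,\mu^{e}$ (resp.\ $c\,\mu_1^{e_1}\mu_2^{e_2}\mu_{12}^{e_{12}}$) with $c\in\Kc$. Hence $R_\star$ is such a monomial, and the content of \cite[Lem.~4.2]{robertcosset} (resp.\ \cite[Lem.~4.4]{robertcosset}) is that the exponent of each $\mu$ is a multiple of $\ell$; since the normalization equations \eqref{eq:equationscompat2} express $\mu^\ell$ (resp.\ each $\mu_i^\ell$) as an element of $\Kc$, the reduction $R$ of $R_\star$ modulo these equations lies in $\Kc$ and is independent of the chosen affine lifts and of the choice of $\ell$-th roots. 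That this exponent statement carries over from the geometric setting of \cite{robertcosset} to our formal one is automatic: $\Kc$ is étale, so two of its elements that agree at every geometric point $T\in K^t(\overk)$ coincide, and the specialization at $T$ is precisely the situation treated in \cite{robertcosset}.

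Next I would compute $R(T)$ for $T=(T_1,\dots,T_t)\in K^t(\overk)$. On specialization $\eta^{(i)}$ becomes the geometric point $\sqrt{\ell_1}T_i+\zeta_i P$, and by Proposition~\ref{prop:pseudogoodformal} the normalization equations pin down $\mu$ (resp.\ $\mu_1,\mu_2,\mu_{12}$) up to an $\ell$-th root of unity so that $\mu\star\teta^{(1)}(T)$ is a compatible lift relative to $\tildeP$. As $R$ is independent of the choice of root, I may pick the roots so that all affine lifts appearing are \emph{good} lifts $(\theta_\nu^A(\cdot))_{\nu\in\Zn}$ attached to $z$-coordinates built from one fixed $z_P$ with $\rho_n(z_P)=P$ (Theorem~\ref{th:mainold}). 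Then, using the $\Lambda_\Omega$-quasi-periodicity and symmetry of the $\theta_\nu$ and the compatibility of $\scalarmult$ with the $\star$-action, the factor $\teta^{(\mu_a)}_{j_a}(T)$ (resp.\ $\teta^{[b]}_{j_b}(T)$) equals $\theta_{j_a}^A(X_a+t_a)$ (resp.\ $\theta_{j_b}^A(X_b+t_b)$), where $(t_1,\dots,t_r)\in\ker F$ is the image of $T$ under the explicit isomorphism $L\iso\ker F$ recalled just before the proposition; here one checks directly that the linear combinations of $z_P$ appearing in $X$ (e.g.\ $(\alpha\zeta_1+\beta\zeta_2)z_P$, $(\gamma\zeta_1+\delta\zeta_2)z_P$ in the four squares case) are the entries of $(1,0,\dots,0)\,{}^tM$ read off in Proposition~\ref{prop:koigen}, and that $\sqrt{\ell_1}\,z_{T_i}\equiv t_i$ in $\tfrac1\ell\Z^g/\Z^g$. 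Therefore $R(T)=\theta_{j_1}^A(X_1+t_1)\cdots\theta_{j_r}^A(X_r+t_r)$.

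Finally I would assemble the pieces. Composing $K^t(\overk)\twoheadrightarrow L(\overk)$, $x\mapsto\sqrt{\ell_1}x$ (which is $N$-to-$1$ by definition of $N$) with the isomorphism $L\iso\ker F$ gives an $N$-to-$1$ surjection $K^t(\overk)\to\ker F$, so
\[
\sum_{(t_1,\dots,t_r)\in\ker F}\theta_{j_1}^A(X_1+t_1)\cdots\theta_{j_r}^A(X_r+t_r)=\frac1N\sum_{T\in K^t(\overk)}R(T),
\]
and by Proposition~\ref{prop:koigen} the left-hand side equals $\theta_k^B(Y_1)\cdots\theta_0^B(Y_r)$, which is \eqref{eq:thetanull3}. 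The step I expect to be the most delicate is the identification $R(T)=\theta_{j_1}^A(X_1+t_1)\cdots\theta_{j_r}^A(X_r+t_r)$: it requires tracking precisely which projective factor is introduced when one passes from an arbitrary affine lift to a compatible lift and then to a good lift, and verifying that the coordinates of the $\teta^{[b]}$ — obtained by differential additions bootstrapped from $\teta^{(12)}$ rather than directly from $\teta^{(b)}$ — still specialize to the good-lift coordinates of the points $X_b+t_b$ dictated by $M$.
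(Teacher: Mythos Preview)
Your overall architecture matches the paper's: reduce \eqref{eq:thetanull3} to Proposition~\ref{prop:koigen} by showing that specializing $R$ at $T\in K^t(\overk)$ reproduces one Koizumi summand, with the factor $1/N$ accounting for the fibers of $x\mapsto\sqrt{\ell_1}\,x$. The divergence is in how you justify $R\in\Kc$. You correctly reduce this to the claim that the exponent of each formal parameter in the monomial $R_\star$ is divisible by~$\ell$, but your appeal to \cite[Lem.~4.2, 4.4]{robertcosset} does not deliver this. As recalled in Section~\ref{sec:recall}, those lemmas concern the \emph{full sum} over $\ker F$ in the geometric parametrization by a basis $(e_i)$ with scalars $\lambda_i,\lambda_{ij}$; they say nothing about individual summands, and the $\lambda_i,\lambda_{ij}$ are not the same parameters as your single $\mu$ (resp.\ $\mu_1,\mu_2,\mu_{12}$) attached to the formal point $\teta^{(1)}$. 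Your \'etaleness transfer does not help either, since it would already presuppose the term-by-term invariance you are trying to prove.

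The paper closes this gap by computing the exponents explicitly. In the two-squares case the exponent of $\mu$ is $1+\beta_0^2$, and one uses $\beta_0^2\equiv-1\pmod\ell$. In the four-squares case the paper tracks, via repeated use of \cite[Rem.~3]{RobLub}, that the discrepancy between $R_\star$ and the product of good-lift theta values is
\[
\Delta=\mu_{12}^{\alpha\beta+\gamma\delta}\,\mu_1^{\,1+\alpha^2+\gamma^2-\alpha\beta-\gamma\delta}\,\mu_2^{\,1+\beta^2+\delta^2-\alpha\beta-\gamma\delta},
\]
and then verifies the arithmetic identities $\alpha\beta+\gamma\delta=0$ and $\alpha^2+\gamma^2=\beta^2+\delta^2$ from the explicit form of $\left(\begin{smallmatrix}\alpha&\beta\\\gamma&\delta\end{smallmatrix}\right)$, whence $\Delta=1$. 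This single computation simultaneously proves $R\in\Kc$ and the identification $R(T)=\theta_{j_1}^A(X_1+t_1)\cdots\theta_{j_r}^A(X_r+t_r)$ --- precisely the step you flag as delicate in your last paragraph. It is the actual substance of the proof and cannot be replaced by a citation to the global-sum lemmas of \cite{robertcosset}.
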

\begin{proof}
  First, we prove the case
  $\ell_2 \neq 1$ is a sum of two squares.  In order to prove
  that $R$ is indeed in $\Alg$, because of Proposition
  \ref{prop:pseudogoodformal}, it suffices to show that the rational
  function $R_{\star}$ does not change when we act on $\teta^{(1)}$ by
  $\mu$ in an \'etale extension of $\Alg$ that satisfy the equations
  coming from $\normalize(\teta^{(1)},\tildeP)$. 
  
  For this let $\teta_{\theta}$ in $\tildeA(\Alg_{\C}(\mu))$ be such
  that $\teta^{(\mu)} = \mu' \star \teta_{\theta}$ where $\mu' \in
  \Alg_{\C}$ is a $\ell^{th}$-root of unity because of equation
  \eqref{eq:equationscompat2}.  Let $\teta^{[1]}_\theta
  =\scalarmult(\beta_0, \teta_\theta, \teta_\theta, \tildeO,\tildeO)$.
  By \cite[Rem. 3]{RobLub}, we have $\teta^{[1]} = {\mu'}^{\beta_0^2}
  \star \teta^{[1]}_\theta$. Thus, we have $R_*=\teta^{(\mu)}_{j_1}
  \teta^{[1]}_{j_2}=\mu' \teta_\theta(T) {\mu'}^{\beta_0^2}
  \teta^{[1]}_\theta$ with $\beta_0^2
  \equiv -1 \mod \ell$. We deduce that $R \in \Alg$ since it is left invariant by the
  action of $\mu'$.

  Now, let $\teta_{\theta}$ in $\tildeA(\Alg_{\C})$ be such that 
 $\{\teta_{\theta}(Q) \mid Q \in K(\overk)\}$ is a system of
  compatible lifts relative to $\tildeP$. 
  Using Theorem \ref{th:mainold}, we can suppose that $\teta^{(\mu)}=
  \teta_{\theta}$ modulo the equations coming from
  $\normalize(\teta^{(1)},\zeta\tildeP)$.  Then we have
  $R(T)=\teta_\theta(T) \teta_\theta(\beta_0 T)=
  \theta_{j_1}^A(X_1+z_{T}) \theta_{j_2}^A(X_2+\beta_0 z_{T})$ for
  $z_T \in \C^g$ such that $\rho_n(z_T)=T$.  Thus the relation
  (\ref{eq:thetanull3}) is just a consequence of Proposition
  \ref{prop:koigen}. The $\frac{1}{N}$ in front of the right hand side
  of \eqref{eq:thetanull3} comes from the fact that the
  parametrisation of $(\ker F)(\overk)$ by $K(\overk)$ we have used is
  an epimorphism the kernel of which has cardinality $N$.

  Now suppose that 
  $\ell_2$ is a sum of four squares.
  Fix a system of global compatible lifts $\{\alpha P+Q \mid \alpha \in
  \{0,\ldots,\ell-1\}, Q \in K(\overk) \}$ relative to $\tildeP$.
  For $i=1,2$, let
  $\teta^{(i)}_\theta$ be the formal point such that
  $\teta^{(i)}_{\theta}(Q_1,Q_2)$ is the corresponding compatible lift above
  $\zeta_i P+Q_i$, and define
  $\theta^{(12)}_{\theta}$ so that
  $\teta^{(12)}_{\theta}(Q_1,Q_2)$ is the corresponding compatible lift above
  $(\zeta_1+\zeta_2) P+Q_1+Q_2$.
 
  We use the same line as for the case above.
  Let $\mu'_1$, $\mu'_2$ and $\mu'_{12}$ be roots in $\Alg_{\C}$ of
  the equations $(E)$ given by $\normalize(\teta^{(1)},\zeta_1\tildeP)$,
  $\normalize(\teta^{(2)},\zeta_2\tildeP)$ and
  $\normalize(\teta^{(12)},(\zeta_1+\zeta_2)\tildeP)$.  By Proposition
  \ref{prop:pseudogoodformal}, $\mu'_1 \star \teta^{(1)}
  ,\mu'_2 \star \teta^{(2)}$ and $\mu'_{12} \star \teta^{(12)}$ differ
  from $\teta^{(1)}_{\theta}, \teta^{(2)}_{\theta}$ and
  $\teta^{(12)}_{\theta}$ by $\ell^{th}$-roots of unity $\mu_1$,
  $\mu_2$ and $\mu_{12}$ respectively.

  Let $\teta^{[1]}_{\theta}=\alpha \teta^{(1)}_{\theta} + \beta \teta^{(2)}_{\theta}$ and $\teta^{[2]}_{\theta}=\gamma
  \teta^{(1)}_{\theta} + \delta \teta^{(2)}_{\theta}$.
We can use repeatedly
\cite[Rem. 3]{RobLub} to 
obtain that: $\alpha
  \teta^{(\mu_1)}+\teta^{(\mu_2)}=\frac{\mu_{12}^\alpha \mu_1^{\alpha
(\alpha-1)}}{\mu_2^{\alpha-1}} (\alpha
\teta_\theta^{(1)}+\teta_\theta^{(2)})$, $\alpha
\teta^{(\mu_1)}=\mu_1^{\alpha^2} (\alpha \teta^{(1)}_\theta)$ and
finally $\teta^{[1]} = \mu_{12}^{\alpha\beta}
  \mu_1^{\alpha^2-\alpha\beta} \mu_2^{\beta^2-\alpha\beta}
  \,\teta^{[1]}_\theta$. We have in the same way
  $\teta^{[2]} = \mu_{12}^{\gamma\delta}
  \mu_1^{\gamma^2-\gamma\delta} \mu_2^{\delta^2-\gamma\delta}
  \,\teta^{[2]}_\theta$. Thus, for $T=(T_1,T_1) \in K^2(\overk)$, we have
  $R(T)=\teta^{(1)}_{j_1}(T) \teta^{(2)}_{j_2}(T)
  \teta^0_{j_3}(T)
  \teta^1_{j_4}(T)= \Delta \teta^{(1)}_{\theta,j_1}(T)
  \teta^{(2)}_{\theta,j_2}(T) \teta^0_{\theta,j_3}(T)
  \teta^1_{\theta,j_4}(T)$ where $$\Delta = \mu_{12}^{\alpha\beta+\gamma\delta}
  \mu_1^{1+\alpha^2+\gamma^2-\alpha\beta-\gamma\delta}
  \mu_2^{1+\beta^2+\delta^2-\alpha\beta-\gamma\delta}.$$ But an
  easy computation shows that $\alpha\beta+\gamma\delta=0$ and
  $\alpha^2+\gamma^2=\beta^2+\delta^2=\frac{b^2+c^2}{a^2+b^2}$ so
  that $\Delta=1$. Finally, we obtain that
  $R(T)=\theta_{j_1}(X_1+z_{T_1})\theta_{j_2}(X_2+z_{T_2})
  \theta_{j_3}(X_3+\alpha
  z_{T_1} + \beta z_{T_1}) \theta_{j_4}(X_4+\gamma z_{T_1} + \delta
  z_{T_2})$ for $z_{T_i} \in \C^g$ such that $\rho_n(z_{T_i})=T_i$ for
  $i=1,2$ and relation 
(\ref{eq:thetanull3}) is again a consequence of Proposition
\ref{prop:koigen}.

The simpler case $\ell_2=1$ can be treated in a similar manner as the
other cases. We leave it up to the reader.
\end{proof}

\onlongversion{%
\begin{remark}
  The fact that the $\ell^{th}$-roots of unity appearing in the
  evaluation of the right hand of the formula \eqref{eq:koizumi} of Proposition
  \ref{prop:koigen} cancel out is not a miracle. It can be explained with
  a more conceptual point of view: these $\ell^{th}$-roots of unity
  correspond to choices of a level $\ell n$-theta structure for $B$
  compatible with the level $n$-theta structure of $A$ via the
  contragredient isogeny of $f:A\rightarrow B$ and we can interpret
  the change of level formula (\ref{eq:thetanull1}) as "forgetting"
  the $\ell$-torsion part of these theta structures to recover a level
  $n$ theta structure for $B$ (see for instance \cite{MR2982438}).  
  
  The stronger fact that they also cancel out when only considering the
  terms in the sum comes from the fact that these terms already forget the part
  of the $\ell$-structure whose automorphisms acts by the $\star$
  operator
  \cite[Prop.~18]{MR2824556}.
\end{remark}}

  In order to turn Proposition \ref{prop:main1} into an algorithm, it
  remains to explain how to compute efficiently the right hand side of
  (\ref{eq:thetanull3}). This is done by the:
  
\begin{lemma}\label{lemma1}
  Let $W \in  \Alg=k[U]/(Q)$, let $(T,S) \in k[U]$ be respectively the
  quotient and remainder of the Euclidean division of $U W Q'$ by $Q$
  (where $Q'$ is the first derivative of $Q$).
We have:
\begin{equation}\label{eq:small}
  \sum_{P \in K(\overk)} W(P) =T(0).
\end{equation}
\end{lemma}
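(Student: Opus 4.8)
The plan is to express the left-hand side via the roots of $Q$ (so that $\sum_{P\in K(\overk)}W(P)=\mathrm{Tr}_{\Alg/k}(W)$) and then evaluate this symmetric sum by a partial-fraction manipulation. Recall from Section~\ref{sec:genereqs} that $\Alg=k[U]/(Q)$ is an étale $k$-algebra of dimension $\#K(\overk)$; hence $Q$ is separable, its roots in $\overk$ are exactly the values $u_P:=U(P)$ for $P\in K(\overk)$, each simple, and for any polynomial representative $W\in k[U]$ of the class one has $W(P)=W(u_P)$. So the claim amounts to $\sum_P W(u_P)=T(0)$.

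First I would use the logarithmic derivative $Q'/Q=\sum_{P\in K(\overk)}1/(U-u_P)$, valid precisely because $Q$ is separable, to write $\frac{U W(U) Q'(U)}{Q(U)}=\sum_{P}\frac{U W(U)}{U-u_P}$. For each $P$, the numerator $U W(U)$ takes the value $u_P W(u_P)$ at $U=u_P$, so $U-u_P$ divides $U W(U)-u_P W(u_P)$; setting $g_P(U):=\frac{U W(U)-u_P W(u_P)}{U-u_P}\in\overk[U]$ gives $\frac{U W(U)}{U-u_P}=g_P(U)+\frac{u_P W(u_P)}{U-u_P}$.

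Next I would sum over $P$ and clear denominators: $U W Q'=Q\cdot\bigl(\sum_{P}g_P\bigr)+\sum_{P}u_P W(u_P)\,\frac{Q(U)}{U-u_P}$. The last term is a polynomial of degree $<\deg Q$, since each $Q(U)/(U-u_P)$ has degree $\deg Q-1$ and is multiplied by the scalar $u_P W(u_P)$; and $\sum_P g_P$ is a polynomial. Since the whole identity is an identity of $k$-polynomials, uniqueness of Euclidean division by $Q$ forces $T=\sum_{P}g_P$ (and $S=\sum_{P}u_P W(u_P)\,Q(U)/(U-u_P)$), in particular $\sum_P g_P$ is defined over $k$. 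Evaluating at $0$: if $u_P\neq 0$ then $g_P(0)=\frac{0-u_P W(u_P)}{0-u_P}=W(u_P)$, while if $u_P=0$ (which can happen for at most the point $0_A$) then $g_P(U)=W(U)$ directly, so again $g_P(0)=W(0)=W(u_P)$. Hence $T(0)=\sum_P g_P(0)=\sum_{P\in K(\overk)}W(P)$, which is \eqref{eq:small}.

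I do not expect a serious obstacle: the argument is a routine residue/partial-fraction computation. The two small points to watch are that the individual $g_P$ live only over $\overk$, so one must invoke uniqueness of Euclidean division of the $k$-polynomial $UWQ'$ by the $k$-polynomial $Q$ to conclude that their sum $T$ is defined over $k$ (and that the value $T(0)$ equals the sum, which is automatically in $k$ by $\Gal(\overk/k)$-invariance); and that the identity survives a possible root $u_P=0$, which is handled above. One also notes, as a sanity check, that the output is independent of the chosen polynomial representative of $W$, since replacing $W$ by $W+cQ$ changes $T$ by $U c Q'$, which vanishes at $0$.
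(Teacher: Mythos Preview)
Your proof is correct and follows essentially the same route as the paper's: both use the partial-fraction identity $Q'/Q=\sum 1/(U-a)$, write $UW/(U-a)$ as a polynomial part plus $aW(a)/(U-a)$, and identify the polynomial part with the Euclidean quotient by uniqueness. Your $g_P$ is exactly the paper's $T_a$; you are slightly more explicit about the case $u_P=0$, about why $T$ lies in $k[U]$, and about independence of the representative of $W$, but the argument is the same.
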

\begin{proof}
Let $\Rc$ be the set of roots of $Q$ in $\overk$, we want to prove
that $\sum_{a \in \Rc} W(a)=T(0)$. We have 
\begin{equation}\label{eq:small1}
  \frac{UW Q'}{Q}=\sum_{a \in \Rc} \frac{UW}{U-a}. 
\end{equation}
For $a \in \Rc$ let $T_a$ be the quotient of the
Euclidean division of $U W$ by $(U-a)$ so that we have $UW=T_a
(U-a)+a W(a)$. Putting this in (\ref{eq:small1}), we obtain that
$\frac{UWQ'}{Q}=\sum_{a \in \Rc} (T_a + \frac{aW(a)}{U-a})$. As $Q
\sum_{a \in \Rc} \frac{a W(a)}{U-a}$ is an element of $k[U]$ of degree
$< \deg(Q)$, we deduce that $S=Q
\sum_{a \in \Rc} \frac{a W(a)}{U-a}$ and $T=\sum_{a \in \Rc} T_a$.
Moreover $T(0)= \sum_{a\in \Rc} T_a(0)$ but $-a
T_a(0)+aW(a)=0$ so that $T_a(0)=W(a)$ (it is easy to check that this also
holds if $a=0$) and we are done.
(Over $\C$, this lemma is just an easy application of the residue theorem.)
\end{proof}

As $R$ defined in Proposition \ref{prop:main1} is an element of
$\Alg^{\otimes r/2}$ if $\ell_2\neq 1$ and an element of $\Alg$ else, Lemma
\ref{lemma1} shows that, in the case 
$\ell$ is a sum of two squares
the
evaluation of the right hand of (\ref{eq:thetanull3}) can be done with
an Euclidean division of an element of $k[U]$ of degree bounded by
$\ell^g$ at the expense $\sO(\ell^g)$ operations in $k$. If 
$\ell$ is a sum of four squares,
$R$ is an element of $\Alg^{\otimes 2}$ and we can
resort two times to Lemma \ref{lemma1}, to carry out the evaluation of
(\ref{eq:thetanull3}). The dominant step is an Euclidean division of
an element of $\Alg[U]$ of degree bounded by $\ell^g$ which can be
done in $\sO(\ell^{2g})$ operations in $k$.  We call $\evaluate$ an
algorithm which takes as input $R \in \Alg^{\otimes r/2}$ and returns
$\sum_{T \in K^{r/2}(\overk)} R(T)$.

If we apply Proposition \ref{prop:main1}, with $P=0$, we obtain an
algorithm to compute $\theta^B_k(0) \ldots \theta^B_0(0)$ for any $k
\in \Zn$ that gives the projective theta null point of $B$ associated
to $\ell \Omega$. Let $P \in A(k)$ and let $z_P \in \C^g$ be such that
$\rho_n(z_P)=P$:
\begin{itemize}
  \item If $\ell$ is a square, Proposition 
\ref{prop:main1} with $\zeta=\sqrt{\ell}$ gives an expression for
    $\theta_k^B(\ell z_P) \theta_0^B(0)$ for $k \in \Zn$.
  \item  
    If $\ell_2=a^2+b^2$, Proposition
    \ref{prop:main1} with $\zeta=\sqrt{\ell_1} a$ gives an expression for
    $\theta_k^B(\ell z_P) \theta_0^B(0)$ for $k \in \Zn$.  
  \item If 
    $\ell_2=a^2+b^2+c^2+d^2$, Proposition
    \ref{prop:main1} with $\zeta_1=\sqrt{\ell_1} a$ and
    $\zeta_2=\sqrt{\ell_1} b$ gives an
    expression for $\theta_k^B(\ell z_P) \theta_0^B(0)^3$ for $k \in
    \Zn$.
\end{itemize} 
  
  In any cases, we obtain the projective coordinates for
  $f(P)$ and we have proved Theorem \ref{main}.
  Algorithms~\ref{algo:iso1} and~\ref{algo:iso2} recapitulate the isogeny computation algorithms we
  have described (we leave up to the reader the easy adaption in the case that $\ell$ is a
  square).  To give more details, in Algorithm~\ref{algo:iso1}
  the line $\teta^{[1]} \la \beta_0\teta^{(\mu)}$ is simply computed
  as $\scalarmult(\beta_0,\teta^{(\mu)}, \teta^{(\mu)},
  \tildeO,\tildeO)$.  Likewise, in Algorithm~\ref{algo:iso2}, the line
  $\teta^{[1]} \la \alpha \teta^{(\mu_1)}+\beta \teta^{(\mu_2)}$ is
  computed as $\alpha \teta^{(\mu_1)}+ \teta^{(\mu_2)} \la
  \scalarmult(\alpha, \teta^{(12)}, \teta^{(\mu_1)},\teta^{(\mu_2)},
  \tildeO)$, $\teta^{[1]} \la \scalarmult(\beta, \alpha
  \teta^{(\mu_1)}+ \teta^{(\mu_2)} , \teta^{(\mu_2)},\alpha
  \teta^{(\mu_1)} , \tildeO)$ and similarly for $\teta^{[2]}$.

\begin{algorithm}
\SetKwInOut{Input}{input}\SetKwInOut{Output}{output}
\SetKwComment{Comment}{/* }{ */}
\Input{
  \begin{itemize}
    \item $\tildeO^A$ the level $n$ theta null point of $(A,\pol_0)$ associated
      to $\Omega \in \Sie$;
     \item $\ell \in \N$ such that 
       $\ell=\ell_1
       \ell_2$ where $\ell_1$ is the biggest square factor of $\ell$ and a decomposition
       $\ell_2=a^2+b^2 \neq 1$ ;
     \item $Q \in k[U]$ such that $\deg(Q)=\ell^g$ describing $K$ (see
       Section \ref{sec:gener});
     \item $P \in A(\overk)$ given its projective coordinates;
      \item $k \in \Zn$.
  \end{itemize}
}
\Output{
  $(f(P))_k$ the $k^{th}$ projective coordinates associated to the level $n$
  projective embedding of $B$
provided by $\ell \Omega$.}
\BlankLine
$\eta \la \normaladd(\sqrt{\ell_1} \ideta ,\sqrt{\ell_1} aP)$ where 
$\sqrt{\ell_1} \ideta$ and
$\sqrt{\ell_1}a P$ are
computed with $\scalarmult$\;
$\teta^{(\mu)} \la \mu \star \teta$ where $\mu$ is a formal parameter\;
$\teta^{[1]} \la \beta_0\teta^{(\mu)}$
where
$\beta_0 = - b/a \mod \ell$\;
$R \la \teta^{(\mu)}_{j_1} \teta^{[1]}_{j_2} \mod \normalize(\teta,\sqrt{\ell_1}a \tildeP)$ where $j=(k,\ldots, 0)M^{-1}
\in \Zn^r$\;
\Return $\evaluate(R)$\;
\caption{Algorithm $\genericisogeny$
for $\ell$ a sum of two squares
}
\label{algo:iso1}
\end{algorithm}

\begin{algorithm}
\SetKwInOut{Input}{input}\SetKwInOut{Output}{output}
\SetKwComment{Comment}{/* }{ */}
\Input{
  \begin{itemize}
    \item Same as in Algorithm~\ref{algo:iso2} except that
     we have a decomposition
       $\ell_2=a^2+b^2+c^2+d^2$ ;
  \end{itemize}
}
\Output{
  $(f(P))_k$ the $k^{th}$ projective coordinates associated to the level $n$
  projective embedding of $B$
provided by $\ell \Omega$.}
\BlankLine
$\eta^{(i)} \la \normaladd(\sqrt{\ell_1} \ideta^{(i)} ,\zeta_i P))$ where 
$\zeta_1=\sqrt{\ell_1} a$, $\zeta_2=\sqrt{\ell_1} b$ for $i=1,2$ and
$\sqrt{\ell_1} \ideta^{(i)}$ and $\zeta_i P$ are computed with
$\scalarmult$\;
$\eta^{(12)} \la \normaladd(\eta^{(1)}+\eta^{(2)})$\;
$\teta^{(\mu_1)} \la \mu_1 \star \teta^{(1)}$, 
$\teta^{(\mu_2)} \la \mu_2 \star \teta^{(2)}$, 
$\teta^{(\mu_{12})} \la \mu_{12} \star \teta^{(12)}$
 where $\mu_1, \mu_2, \mu_{12}$ are formal parameters \;
$(E) \la \normalize(\teta_1,\zeta_1 \tildeP) \cup
\normalize(\teta_2,\zeta_2 \tildeP) \cup 
\normalize(\teta_{12},(\zeta_1+\zeta_2) \tildeP)$
\;

$\teta^{[1]} \la \alpha \teta^{(\mu_1)}+\beta \teta^{(\mu_2)}$,
$\teta^{[2]} \la \gamma \teta^{(\mu_1)}+\delta \teta^{(\mu_2)}$ where
$\left( \begin{smallmatrix}
\alpha & \beta \\ \gamma & \delta \end{smallmatrix} \right) = 
\left( \begin{smallmatrix} a & b \\ -b & a \end{smallmatrix}
\right)^{-1}
\left( \begin{smallmatrix} c & d \\ d & -c \end{smallmatrix} \right)$\;

$R\la \teta^{(\mu_1)}_{j_1} \teta^{(\mu_2)}_{j_2}
\teta^{[1]}_{j_3} \teta^{[2]}_{j_4}$  modulo $(E)$ where $j=(k,\ldots, 0)M^{-1}
\in \Zn^r$\;
\Return $\evaluate(R)$\;
\caption{Algorithm $\genericisogeny$
for $\ell$ a sum of four squares}
\label{algo:iso2}
\end{algorithm}

 We note that in Algorithm~\ref{algo:iso2} we need three calls to
 \normalize, which costs each two scalar multiplications by $\ell$. We
 can improve this as follow: first compute $\eta^{(1)}_0 =
 \ideta^{(1)}+P$ and use \normalize{} to normalize an affine lift
 $\teta^{(1)}_0$ up to a factor $\mu_1$. From this data it is easy to
 recover a compatible affine lift $\teta^{(2)}_0$ of $\ideta^{(2)}+
 P$ up to a factor $\mu_2$.  Using differential additions, one can
 then recover the compatible lifts $\teta^{(1)}$, $\teta^{(2)}$ of
 Proposition~\ref{prop:main1}.  Likewise one can normalize
 $\ideta^{(1)}+\ideta^{(2)}$ up to a factor $\lambda_{12}$ by using
 only equations~\eqref{eq:equationscompat1}.  One can compute
 $\teta^{(12)}$ using differential additions from a compatible affine
 lift of $P+\ideta^{(1)}+\ideta^{(2)}$. But the latter point can be
 computed as a three way addition \cite[Section~3.6]{optimal} between
 the corresponding lifts above of $P+\ideta^{(1)}, P+\ideta^{(2)},
 \ideta^{(1)}+\ideta^{(2)}$. This method requires only three scalar
 multiplications by $\ell$ rather than six to normalize the points.

Throughout the article we have supposed that $4 \mid n$ so that we can
compute normal additions. But actually one can work with level $n=2$
exactly as in \cite[Section~4.4]{robertcosset}.
The only difficulty is the case
when $\ell$ is a sum of four squares
where we need to
compute a point of the form $\eta^{(1)}+\eta^{(2)}$ in
Proposition~\ref{prop:main1}.
Here, we replace the normal addition of
the two formal points $\eta^{(1)}$ and $\eta^{(2)}$ by any formal point $\eta$ such
that $\eta(Q)\in \{\eta^{(1)}(Q)+\eta^{(2)}(Q),
\eta^{(1)}(Q)-\eta^{(2)}(Q)\}$.
Computing such an $\eta$ requires taking a certain square root in $\Alg$ as in
\cite[Section~3.3]{optimal} 
(this square root
may have more than 2 solutions since $\Alg$ may not be a field).
Also $\Alg$ is not an \'etale algebra anymore because we identify $P$ with
  $-P$ so the points in $K \setminus \{0_A\}$ have multiplicity~$2$, but we
can instead work directly on $(K \setminus \{0_A\})/\pm 1)$ and gain a
factor~$2$. See Section~\ref{sec:examples} for an example.

\onlongversion{%
We end up this section by comparing the algorithm presented in this
paper, with the algorithm of \cite{robertcosset}. In the later
algorithm, all the affine lifts of points of $K(\overk)$ are globally
compatible in the sense that they are deduced by the way of differential
additions from the knowledge of a minimal set of compatible good lifts
as in Theorem~\ref{th:mainold}.
This
property is not true if we specialize the lifts of the formal points
of Proposition \ref{prop:main1} to geometric points of $K$. 
For instance in the case $\ell \equiv 1\mod 4$, $\ell$ prime 
then for a fixed $Q \in K(\overk)$
the lifts $\teta^{(\mu)}(Q), \teta^{[1]}(Q) \in
\tildeK(\overk)$ (modulo the equations coming from $\normalize$) of $\zeta P+Q$ and $\beta_0 (\zeta P+Q)$ are 
locally compatible with $\tildeP$, but they may not be globally compatible
over all $Q \in K(\overk)$.
But as seen in the proof of Proposition~\ref{prop:main1},
in the evaluation of the right hand
of (\ref{eq:thetanull1}) this local compatibility between the elements
appearing in the sum is enough.
The authors of \cite{robertcosset} did not use the local compatibility,
because it is actually faster to compute potential compatible lifts for a basis
of $K(\overk)$ and use differential addition to get the other compatible points
than it is to normalize locally for each term in the sum.
This show the usefulness of working with formal points since we can
normalize everything once and for all.}

\section{Example}\label{sec:examples}

We give a simple example to illustrate the algorithm in the case that
the dimension $g=1$, the base field $k=\F_{1009}$ and the level $n=2$.
Let $\tilde{0}_A \in \Aff^{\Z(2)}$ be the level~$2$ (affine) theta null point
with coordinates $(971,94)$. This theta null point corresponds to the
elliptic curve $A$ with Weierstrass equation $y^2 = x^3 + 762x^2 +
246x$. This elliptic curve has a unique subgroup $K$ defined over $k$ in its
$5$-torsion. If $(U_0,U_1)$ are the theta coordinates of level $2$, a
system of equations for this kernel is given by: $U_0=1$ and $R(U_1)=0$ where
$R(U_1)=U_1^5 + 751U_1^4 + 546U_1^3 + 447U_1^2 + 660U_1 + 339$. We
explain how to compute the level $2$ theta null point of $B=A/K$.

The polynomial $R$ factorizes as $R(U_1)=(U_1 + 268)Q(U_1)^2$ where
$Q(U_1)=U_1^2 + 746U_1 + 353$. The linear term corresponds to the $U_1$
coordinate of the theta null point of $A$, while the fact that $Q$ has
multiplicity
two comes from the fact that we work on the Kummer
variety here associated to $A$ (see Section \ref{sec:basicfacts}). 

We consider the algebra $\Alg=k[U]/(Q)$, and we
look at the formal point $\eta=(1:U)$. 
Let $\teta=(\lambda,\lambda U)$ be a potential compatible lift.
An easy computation shows that
$2 \teta=\lambda^4(980U+906,103U+7)$ and
$3\teta=\lambda^9(861U+437,572U+129)$. We thus find that $\lambda^5=126U+129=
(980U+906)/(861U + 437)=(103U + 7)/(572U + 129)$.

Now from Equation~\eqref{eq:thetanull1}, we have (up to a common factor) that
\begin{multline*}
\theta_{i_1}^B(0)\theta_{i_2}^B(0)=
\sum_{\substack{t_1,t_2 \in K\\ t_1+2t_2=0\\ -2t_1+t_2=0}} \theta_{i_1}^A(t_1)\theta_{i_2}^A(t_2)
=\sum_{t \in K} \theta_{i_1}^A(t)\theta_{i_2}^A(2t)\\
=\theta_{i_1}^A(0)\theta_{i_2}^A(0)+
\sum_{t \in K \setminus \{0\}} \theta_{i_1}^A(t)\theta_{i_2}^A(2t).
\end{multline*}
Now if we let $W=\theta_{i_1}(\teta)\theta_{i_2}(2\teta)$, we have
that 
$\theta_{i_1}^B(0)\theta_{i_2}^B(0)=
\theta_{i_1}^A(0)\theta_{i_2}^A(0)+2T(0)$ where $T$ is the polynomial
defined in Lemma~\ref{lemma1}.

If $i_1=i_2=0$ then $W=\lambda^5(980u+906)$, $T=380$ and 
$\theta_{i_1}^B(0)\theta_{i_2}^B(0)=186$.
If $i_1=0,i_2=1$ then $W=\lambda^5(103u+7)$, $T=629U+529$ and 
$\theta_{i_1}^B(0)\theta_{i_2}^B(0)=513$.

The level $2$ theta null point $(186:513)$ corresponds to the elliptic
$B$ given by the Weierstrass equation
$y^2 = x^3 + 133x^2 + 875x$. In this case we could have computed the
isogeny by an application of V\'elu's formulas, this indeed yields a
curve isomorphic to $B$. (The conversion between theta and Weierstrass
coordinates was done using \cite{avisogenies}).

\section{Conclusion}

In this paper, we have presented an algorithm to compute isogenies
between abelian varieties in the arguably most general setting which
takes advantage of the field of definition of the kernel in order to
improve the complexity. We note that the quasi-optimality announced in
the title is only for the case that $\ell$ is a sum of two squares. It would be very interesting
to extend it to encompass all cases.  Another question is  to
handle the case $\ell$ non prime to $2n$.  The problem here is
that there may be several ways to descend a symmetric theta structure
of $\pol^{\ell}$ along the isogeny defined by $K$, and it is not clear
how to specify the choice of a symmetric theta structure of level~$n$
on $B=A/K$ via a set of equations.

A related question is how to find equations for a rational kernel $K$
given the lack of modular polynomials for higher dimension.  In the
case that $A$ is a Jacobian of a curve over a finite field the zeta
function of which is known, it is possible to work with the geometric
points of $\ell$-torsion by taking random points in an appropriate
extension (for more details see \cite{avisogenies}). One can
then try to find directly a basis of a rational kernel $K$.
Generating the equations of $K$ from such a basis takes $\sO(\ell^g)$
operations in the field $k'$ the compositum of the field of
definitions of the geometric points of $K$.  If we already have the
geometric points of the kernel, it might seem faster to directly use
the algorithm from \cite{robertcosset}, but actually when $\ell$ is a
sum of four squares this algorithm takes $\sO(\ell^{2g})$
operations in $k'$, so it is slower than the algorithm presented in
this paper ($\sO(\ell^g)$ operations in $k'$ to find the equation of
the kernel and $\sO(\ell^{2g})$ operations in $k$ to compute the
isogeny).

\bibliographystyle{abbrv}
\bibliography{rational}
\end{document}